\documentclass[12pt]{amsart}
\usepackage{epsfig,psfrag,verbatim,amssymb,amsmath,amsfonts}
\addtolength{\textwidth}{20mm}
\addtolength{\evensidemargin}{-10mm}
\addtolength{\oddsidemargin}{-10mm}
\newtheorem{theorem}{Theorem}
\newtheorem{lemma}[theorem]{Lemma}
\newtheorem{prop}[theorem]{Proposition}

\newtheorem{ex}{Example}
\newtheorem{rem}{Remark}

\newtheorem{problem}{Open Problem}

\newcommand{\Y}{\mathcal{Y}}
\newcommand{\X}{\mathcal{X}}
\newcommand{\F}{\mathcal{F}}
\newcommand{\I}{\mathcal{I}}

\newcommand{\R}{\mathbb{R}}

\newcommand{\Z}{\mathbb{Z}}
\newcommand{\N}{\mathbb{N}}

\newcommand{\C}{\mathbb{C}}

\newcommand{\la} {\lambda}
\newcommand{\si}{\sigma}

\newcommand{\ph}{\varphi}
\newcommand{\om}{\omega}

\newcommand{\Om}{\Omega}

\newcommand{\eps}{\varepsilon}
\newcommand{\won}{{\boldsymbol 1}}

\begin{document}

\setcounter{page}{1}

\title[Interpolation percolation
]{Interpolation percolation}

\author{Martin P.W.\ Zerner} 

\thanks{\textit{2010
Mathematics Subject Classification.} Primary: 60D05, 60K35. Secondary: 54D05}  
\thanks{\textit{Key words:} Interpolation, path connected, percolation, stationary random set.} 
\thanks{\textit{Acknowledgment:} The author was partially supported by the German-Israeli
Foundation, grant no.\ 974-152.6/2007.}

\begin{abstract} 
Let $\mathcal X\subset \R$ be countably infinite and let $(\mathcal Y_x)_{x\in \mathcal X}$ be an 
independent family of stationary random sets $\mathcal Y_x\subseteq\R$, e.g.\ homogeneous Poisson point  processes $\Y_x$ on $\R$. 
We 
give criteria for the a.s.\ existence of various ``regular'' 
functions $f:\R\to\R$ with the property that
$f(x)\in\Y_x$ for all $x\in \X$. Several open questions are posed.
\end{abstract}
\maketitle

\section{Introduction}
In classical discrete percolation \cite{Gr99} one randomly and independently deletes  edges or vertices from a graph and considers the properties of the connected components of the remaining graph $V$. In standard continuum percolation \cite{MR96}, which is concerned with  Boolean models,
one removes balls, whose centers form a random point process, from space and again investigates 
the connectivity properties of the remaining part $V$ of space or, somewhat more commonly, of its complement $V^c$. In both cases, 
each component deleted from the underlying medium has, in some sense, a strictly positive volume. Moreover,   any bounded region of the space intersects a.s.\ only  a finite number of these components  (see e.g.\ \cite[Proposition 7.4]{MR96} for continuum percolation).

In contrast, in fractal percolation \cite[Chapter 13.4]{Gr99} and continuum fractal percolation \cite{Za84} \cite[Chapter 8.1]{MR96} one may delete from any bounded open region  countably many components, each of which has a strictly positive volume. For example, \cite{Za84} deals with the Hausdorff dimension of the set 
$V=\R^n\backslash\bigcup_{i\in\N}\Gamma_i$,
where $\Gamma_1,\Gamma_2,\ldots$ are independent random open sets in $\R^n$, 
the so-called cutouts, 
e.g.\ scaled Boolean models.

In the present paper we introduce another continuum percolation model, in $\R^2$, in which countably many components may be removed from any bounded region. However, in contrast to the previous models, 
these components 
are null sets in $\R^2$. In particular, the remaining set $V\subset \R^2$ has full Lebesgue measure.  
The cutouts are arranged in such a way that $V$ exhibits various phase transitions.
The precise model is the following.

Fix a complete  probability space $(\Om,\F,P)$  and 
a countably infinite set $\X\subset \R$. For all $x\in\X$ let $\Y_x\subseteq \R$ be a random closed set, i.e.\ 
$\Y_x$ is a 
random variable on $(\Om,\F,P)$ with values in the space of closed subsets of $\R$ equipped with the Borel $\si$-algebra generated by the so-called Fell topology, see \cite[Chapter 1.1.1]{Mo05} for details.
 Throughout we suppose that 
\begin{equation}\label{hab}
P[\Y_x=\emptyset]=0\quad\mbox{for all $x\in\X$.}
\end{equation} 
Another common assumption will be that 
\[\mbox{(IND)}\hspace*{10mm}\begin{array}{l}\mbox{$(\Y_x)_{x\in\X}$ is independent,}
\end{array}\hspace*{100mm}\]
which is defined as usual,  see \cite[Definition 1.1.18]{Mo05}.
We will also need to assume some translation invariance of the sets $\Y_x$ as described in \cite[Chapter 1.4.1]{Mo05}. One such possible assumption is that 
\[\mbox{(STAT)}\hspace*{10mm}\begin{array}{l}\mbox{$\Y_x$ is stationary for all $x\in\X$, i.e.\  $\Y_x$ has} \\
\mbox{for all $a\in\R$ the same distribution as $\Y_x+a$.}
\end{array}\hspace*{50mm}\]
This assumption is stronger than the hypothesis that 
\[\mbox{(1STAT)}\hspace*{10mm}\begin{array}{l}\mbox{$\Y_x$ is  first-order stationary for all $x\in\X$, i.e.\ 
for all $a,b,c\in\R$,}\\
\mbox{with $a\le b$, $P\left[\Y_x\cap[a+c,b+c]\ne\emptyset\right]=P\left[\Y_x\cap[a,b]\ne\emptyset\right]$. }
\end{array}\hspace*{30mm}\]
Our main example, which satisfies all of the above conditions, is the following:
\[\mbox{(PPP)}\hspace*{10mm}\begin{array}{l}\mbox{$(\Y_x)_{x\in\X}$ are independent homogeneous Poisson point processes}\\
\mbox{on $\R$ with intensities $\la_x>0$, where $(\la_x)_{x\in\X}$ is fixed.}\end{array}\hspace*{30mm}\]
For a renewal process with interarrival times which are not exponentially distributed, like under (PPP), but Weibull distributed, see Example \ref{bb}. Another example, which fulfills (IND) and (STAT) and is periodic, is 
\begin{equation}\label{ano}\begin{array}{l}\mbox{$\Y_x=\la_x(K_x+\Z+U_x),$\quad where $\la_x\ne 0$ and all $K_x\subset \R\ (x\in\X)$}\\
\mbox{are compact and $(U_x)_{x\in\X}$ is i.i.d.\ with $U_x\sim{\rm Unif}[0,1]$.}
\end{array}
\end{equation}

We now remove for each $x\in\X$  the set $\{x\}\times\Y_x^c$ from $\R^2$, where $\Y_x^c=\R\backslash\Y_x$, and  investigate the remaining set
\[V:=\R^2\backslash\bigcup_{x\in\X}\left(\{x\}\times \Y_x^c\right)=\left\{(x,y)\in\R^2\mid x\in\X\Rightarrow y\in\Y_x\right\}.
\]
 We thus ``perforate" $\R^2$ by cutting out random subsets of parallel vertical lines  to obtain a ``vertically dependent" random set $V\subset\R^2$. 
For a similar discrete percolation model with vertical dependence see \cite[Section 1.6]{Gr09}.

What are the topological properties of $V$?
It is easy to see  that $V$ is always connected, see Proposition \ref{conn}. 
However, 
whether $V$ is  path-connected or not depends on the parameters. (Recall that $V$ is path-connected if and only if for all $u,v\in V$ there is a continuous function $f:[0,1]\to V$ with $f(0)=u$ and $f(1)=v$.) 
\begin{theorem}\label{AA}
Assume (PPP) and let $\X$ be bounded. Then $V$ is a.s.\ path-connected if
\begin{equation}\label{wm}
\forall \eps>0\quad \sum_{x\in\X}e^{-\la_x\eps}<\infty,
\end{equation}
and a.s.\ not path-connected otherwise.
\end{theorem}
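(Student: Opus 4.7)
I will handle the two directions separately: necessity via a second Borel--Cantelli argument at an accumulation point of $\X$, and sufficiency via a random continuous interpolation built from a first Borel--Cantelli estimate.

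For \emph{necessity}, suppose $\sum_{x \in \X} e^{-\la_x \eps_0} = \infty$ for some $\eps_0 > 0$. Using the boundedness of $\X$, iterated halving of intervals on which the sum remains divergent produces a point $x_0 \in \overline{\X}$ with $\sum_{x \in \X \cap U} e^{-\la_x \eps_0} = \infty$ for every neighborhood $U$ of $x_0$; splitting each $U$ at $x_0$ into its two halves and using the monotonicity of the one-sided sums in $\delta$, I may assume WLOG that $\sum_{x \in \X \cap (x_0 - \delta, x_0)} e^{-\la_x \eps_0} = \infty$ for all $\delta > 0$. Pick $u, v \in V$ with $u_1 < x_0 < v_1$, which is possible because $\{(x,y) : x \notin \X\} \subset V$ is dense. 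The goal is to show a.s.\ no continuous path in $V$ joins $u$ and $v$. For any hypothetical continuous $f = (f_1, f_2) : [0,1] \to V$ with $f(0)=u$, $f(1)=v$, set $t^* := \inf\{t : f_1(t) \ge x_0\}$ and $y^* := f_2(t^*)$; for each $x \in \X$ with $u_1 < x < x_0$ the intermediate value theorem and the minimality of $t^*$ produce $s_x := \sup\{s \in [0, t^*] : f_1(s) = x\}$ satisfying $s_x \to t^*$ as $x \to x_0^-$ through $\X$, whence $y_x := f_2(s_x) \in \Y_x$ converges to $y^*$. To rule out this convergent selection, invoke the second Borel--Cantelli lemma, available thanks to (IND): for each $q \in \Q$ the independent events $\{\Y_x \cap (q - \eps_0/4, q + \eps_0/4) = \emptyset\}$ over $x \in \X \cap (x_0 - \delta, x_0)$ have probabilities $e^{-\la_x \eps_0/2} \ge e^{-\la_x \eps_0}$ whose sum diverges, so a.s.\ infinitely many such $x$ realize the event; intersecting the corresponding null sets over rational $q$ and $\delta = 1/m$ preserves this, and choosing $q$ within $\eps_0/8$ of the hypothetical $y^*$ forces the required contradiction.

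For \emph{sufficiency}, assume $\sum_{x \in \X} e^{-\la_x \eps} < \infty$ for all $\eps > 0$. To connect $u, v \in V$ with $u_1 < v_1$, I construct a continuous $f : [u_1, v_1] \to \R$ with $f(u_1) = u_2$, $f(v_1) = v_2$, and $f(x) \in \Y_x$ for all $x \in \X \cap [u_1, v_1]$; the graph of $f$ then furnishes the path. Begin with the linear target $g$ through $(u_1, u_2), (v_1, v_2)$, and for each $x \in \X$ let $y(x) \in \Y_x$ be a point of $\Y_x$ closest to $g(x)$. The first Borel--Cantelli lemma applied to $\sum_x P[|y(x) - g(x)| > \eps] = \sum_x e^{-2\la_x \eps} < \infty$ yields that, for each $\eps > 0$, a.s.\ only finitely many $x$ satisfy $|y(x) - g(x)| > \eps$; consequently $y(x) \to g(x^*)$ as $x \to x^*$ through $\X \setminus \{x^*\}$ at every $x^* \in \overline{\X}$, allowing continuous extension whenever $x^* \notin \X$. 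At accumulation points of $\X$ that themselves lie in $\X$, continuity would force $g(x^*) = f(x^*) \in \Y_{x^*}$, which is generically false; I resolve this by a Cantor--Bendixson induction on the countable derived hierarchy $\X \supseteq \X^{(1)} \supseteq \X^{(2)} \supseteq \cdots$, at each ordinal level prescribing $f$-values at the higher-rank accumulation points inside the corresponding $\Y_{x^*}$ and redefining the target in shrinking neighborhoods to absorb those prescriptions before descending to lower ranks. Only countably many levels occur, so a single countable union of null sets suffices. Finally, one upgrades ``for each pair, a.s.\ a path'' to ``a.s., $V$ is path-connected'' by working with a countable dense subset of $V$ and using that points of $V$ on vertical lines $\{x\} \times \R$ with $x \notin \X$ are freely connected within $V$.

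The \emph{main obstacle} is the sufficiency construction at accumulation points of $\X$ that belong to $\X$: the naive nearest-point selection produces a discontinuity there, and the transfinite Cantor--Bendixson bookkeeping required to restore continuity while keeping the exceptional null set uniformly controlled is the principal technical difficulty.
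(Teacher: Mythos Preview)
Your necessity argument is essentially the paper's: locate an accumulation point with divergent one-sided sum, then use independence to rule out any continuous crossing by a countable decomposition over target heights and neighbourhood widths.

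Your sufficiency argument, however, has a genuine gap. The Cantor--Bendixson induction you propose relies on the derived hierarchy $\X\supseteq\X^{(1)}\supseteq\X^{(2)}\supseteq\cdots$ eventually terminating, so that every point of $\X$ acquires a finite (or at least well-founded) rank. But $\X$ is merely countable and bounded, not closed; for instance take $\X=\Q\cap[0,1]$ with $\la_{x_n}=n^2$ under any enumeration. Then every point of $\X$ is an accumulation point of $\X$, so $\X^{(1)}=\X$ and the hierarchy never decreases. There is no ``highest rank'' at which to begin prescribing values, and the induction cannot start. More generally, whenever $\overline{\X}$ has nonempty perfect kernel (equivalently, $\X$ is somewhere dense-in-itself), your scheme stalls at the points of $\X$ lying in that kernel.

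The paper circumvents this entirely by ordering the points of $\X$ not by topological rank but by the \emph{size of the required correction}. One shows from Borel--Cantelli that for each $\eps>0$ the set $\{x\in\X\cap[a_0,a_1]:D_x(y)\ge\eps\text{ for some }y\in[-K,K]\}$ is finite, and then partitions $\X\cap(a_0,a_1)$ into finite layers $A_n$ according to whether $D_x\in[2^{-n},2^{-n+1})$ roughly. A piecewise-linear approximation $f_n$ is built by prescribing values on $A_0\cup\cdots\cup A_n$ and interpolating; since $A_{n+1}$ is finite and each new correction is at most $2^{-n}$, one gets $\|f_{n+1}-f_n\|_\infty\le 2^{-n}$ and hence uniform convergence. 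This L\'evy-type construction is indifferent to the topology of $\X$ and is exactly what your nearest-point idea is missing: rather than trying to match a fixed target $g$ in one pass and then repair, it updates the target after each finite batch so that the remaining corrections shrink geometrically.
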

\begin{ex}\label{ex2}{\rm Assume (PPP), suppose $\X$ is bounded and let  $(x_n)_{n\in\N}$ enumerate $\X$. 
Then 
 $V$ is a.s.\ path-connected if $\log n=o(\la_{x_n})$ and a.s.\ not path-connected if $\la_{x_n}=O(\log n)$, cf.\ Example \ref{ex1}.}
\end{ex}

Note that (\ref{wm}) depends only on the intensities $\la_x$, counted with multiplicities, but not on $\X$ itself.

Theorem \ref{AA} will be generalized in Theorem \ref{cont}. There it will also be shown that $V$ is a.s.\ path-connected if and only if  there
is a continuous function $f:\R\to\R$ whose graph $\mbox{graph}(f):=\{(x,f(x))\mid x\in\R\}$ is contained in $V$.  This brings up the question under which conditions there are functions $f:\R\to\R$ which have other regularity properties than continuity and  
which belong to
\[\I:=\left\{f:\R\to\R\mid{\rm graph}(f)\subseteq V\right\}=\left\{f:\R\to\R\mid \forall x\in\X: f(x)\in \mathcal Y_x\right\}.\]
The elements of $\I$ in some sense interpolate the sets $\Y_x,\ x\in\X$, see Figure \ref{333} for examples. For this reason we suggest the name \textit{interpolation percolation} for this model.
\begin{figure}[t]\label{1}
\epsfig{figure=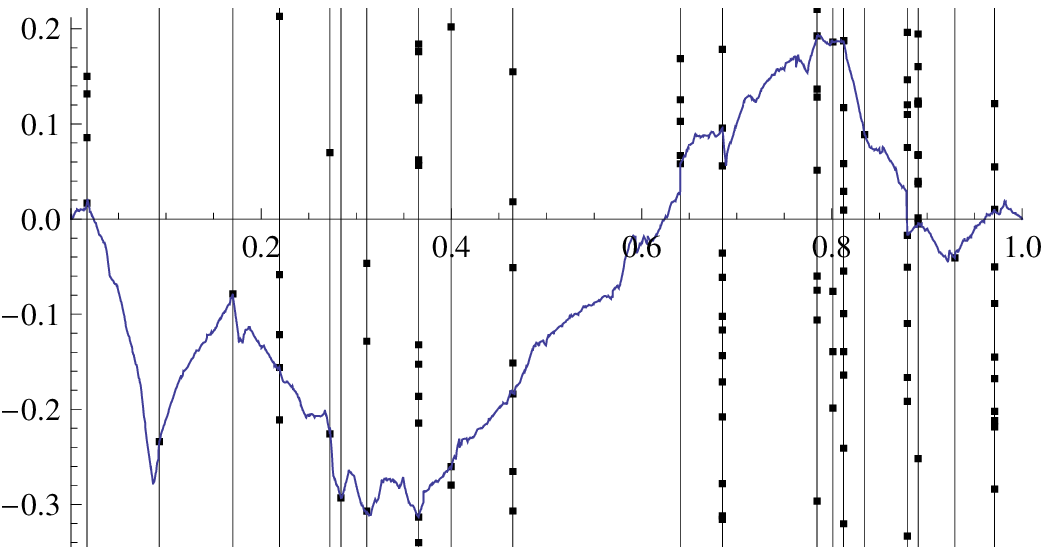, width=270pt}\hspace*{3mm} 
\epsfig{figure=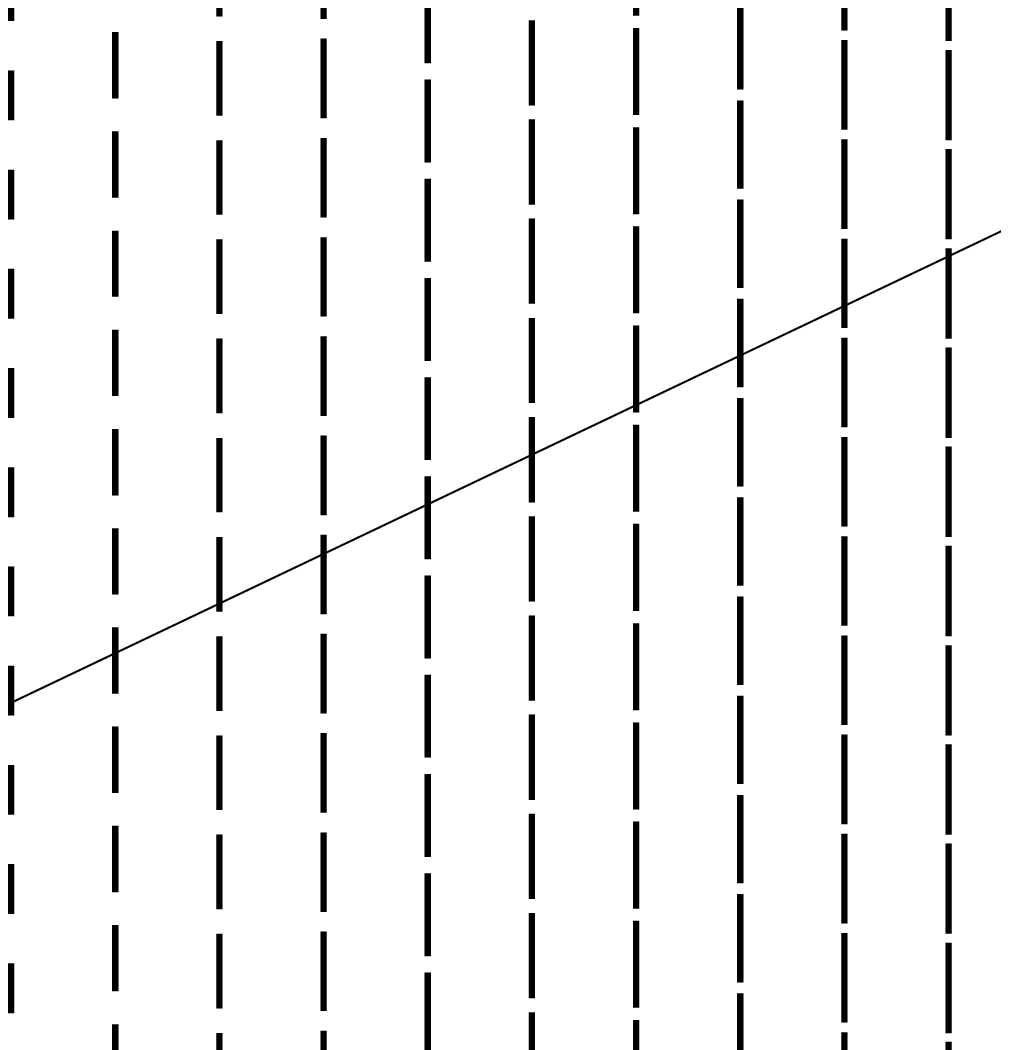,width=140pt} 
\caption{\footnotesize (a) In the left figure $x_n, n\in\N$, are independent and uniformly distributed on [0,1], $\X=\{x_n:\ n\in \N\}$, and $\Y_{x_n}$, $n\in\N$,  are independent Poisson point processes of intensity $\la_{x_n}=n$, independent also of $(x_n)_{n\in\N}$. The figure shows $\Y_{x_1},\ldots,\Y_{x_{20}}$ and  an interpolating continuous function $f\in\I$ with $f(0)=f(1)=0$. (b) In the right figure $\X=\{2,3,\ldots\}$ and $\Y_n=[1/n,1]+\Z+U_n$, where $U_n, n\ge 2$, are independent and uniformly distributed on [0,1].
The figure shows  $\Y_{2},\ldots,\Y_{11}$ and an interpolating line, see also Open Problem \ref{speed}.}\label{333}
\end{figure}
In Section 2 we derive some conditions under which the following subsets of $\Om$, which describe the existence of interpolating functions with various regularity properties, occur or don't occur. 
\[\begin{array}{rcll}
B&:=&\left\{\exists f\in\I:\ f\mbox{ is bounded}\right\}&\mbox{(see Prop.\ \ref{bou}),}\vspace*{1mm}\\
C&:=&\left\{
\exists f\in\I:\ f\mbox{ is continuous}\right\}&\mbox{(see Th.\ \ref{cont}),}\vspace*{1mm}\\
M&:=&\left\{
\exists f\in\I:\ f\mbox{ is increasing\footnotemark\ and bounded}\right\}&\mbox{(see Th.\ \ref{mono}),}\vspace*{1mm}\\
BV&:=&\left\{
\exists f\in\I:\ f\mbox{ is of bounded variation}\right\}&\mbox{(see Th.\ \ref{bv}),}\vspace*{1mm}\\
L_K&:=&\multicolumn{2}{l}{\left\{
\exists f\in\I:\ f\mbox{ is Lipschitz continuous with Lipschitz constant $K$}\right\},}\\
&&\mbox{ where $K>0$,}&\mbox{(see Th.\ \ref{li}),}\vspace*{1mm}\\
P_m&:=&\left\{
\exists f\in\I:\ f\mbox{ is a polynomial of degree $m$}\right\},\\
&&\mbox{ where $m\in\N_0=\{0,1,2,\ldots\}$,}&\mbox{(see Prop.\ \ref{po}, Th.\ \ref{shepp}),}\vspace*{1mm}\\
A&:=&\left\{
\exists f\in\I:\ f\mbox{ is real analytic}\right\}&\mbox{(see Prop.\ \ref{ana}).}
\end{array}
\]
\footnotetext{Here and in the following a function $f$ is called increasing if and only if $f(s)\le f(t)$ for all $s<t$. Similarly, $f$ is called decreasing if $-f$ is increasing.}
It will follow from the completeness of $(\Om,\F,P)$ that these sets are  events, i.e.\ elements of $\F$. Our results for $B, P_m$ and $A$ are not difficult.

\begin{rem}\label{0E}{\rm {\bf  (0-1-law)} Note that every event defined above, call it $G$, is invariant under vertical shifts of $V$, i.e.\ $G$  occurs if and only if it occurs after replacing $V$ by any $V+(0,a)$, $a\in\R$. Therefore, under suitable ergodicity assumptions, $P[G]\in\{0,1\}$.
This is the case if (PPP) holds. In general, e.g.\ in case (\ref{ano}), this need not be true, see Remark \ref{no0}.}
\end{rem}
\begin{rem}\label{mm}{\rm {\bf  (Monotonicity)}
There is an obvious monotonicity property:
If $\X$ is replaced by  $\X'\subseteq \X$ and  $(\Y_x)_{x\in\X}$ by $(\Y'_{x'})_{x'\in\X'}$ with $\Y'_{x'}\supseteq \Y_{x'}$ for all $x'\in\X'$ then $V$ and $\I$ and all the events defined above and their probabilities increase. 
}
\end{rem}

\begin{rem}\label{clo}{\rm {\bf (Closedness)} There are various notions of random sets. Random closed sets
seem to be the best studied ones, see \cite{Mo05} and Chapter 1.2.5 therein for a discussion of non-closed random sets. For this reason 
we assume the sets $\Y_x$ to be closed even though this assumption does not seem to be essential for our results. An alternative, but seemingly less common notion of stationary random and not necessarily closed sets is described e.g.\ in \cite[Chapter 8]{JKO94}. 
}
\end{rem} 

\begin{rem}\label{rel}{\rm {\bf (Further connections to other models)} 
This model is related to various other models in probability.

(a) (Brownian motion) Our construction of the continuous functions in the proof of Theorem \ref{AA} has been inspired by Paul L\'evy's method of constructing Brownian motion. In Example \ref{bb} we shall even choose  $\X\subseteq (0,1]$ and $(\Y_x)_{x\in\X}$ in a non-trivial way and use this method to define  a Brownian motion $(B_x)_{0\le x\le 1}$  on the same probability space $(\Om,\F,P)$ such that a.s.\ $(B_x)_{x\in\R}\in\I$. (Here we let $B_x:=0$ for $x\notin(0,1]$ to extend $B_\cdot(\om)$ to a function defined on $\R$.)

(b) (Lipschitz and directed percolation) In \cite{DDGHS10} random Lipschitz functions $F:\Z^{d-1}\to\Z$ are constructed such that, for every $x\in\Z^{d-1}$, the site
$(x,F(x))$ is open in a site percolation process on $\Z^d$. 
The case $d=2$ is easy since it is closely related to  oriented site percolation on $\Z^2$. 
However, if we let $d=2$, denote the Lipschitz constant by $L$ and let the parameter $p_L$ of the site percolation process 
depend on $L$ such that $Lp_L \to \la\in(0,\infty)$ as $L\to\infty$ then we obtain after applying the scaling $(x,y)\mapsto (x,y/L)$ in the limit
$L\to\infty$ 
the problem of studying the event $L_1$ under the assumption (PPP) with $\X=\Z$ and $\la_x=\la$ for all $x\in\X$. Theorem \ref{li} deals with
this case and is proved using oriented percolation. 

(c) (First-passage percolation) Our computation of  $P[BV]$ in Theorem \ref{bv} applies a method used for the study of first-passage percolation on  spherically symmetric  trees in \cite{PemPer94}. 

(d) (Intersections of stationary random sets)
Note that 
\begin{equation}\label{int}
P_0=\Big\{
\bigcap_{x\in\X} \Y_x
\ne\emptyset \Big\}.
\end{equation}
Such intersections of stationary random sets (or the unions of their complements) have been investigated e.g.\ in \cite{Sh72a}, \cite{Sh72b}, \cite{KP91} and \cite{JS08}. Studying the events $P_m\supseteq P_0$, $m\ge 1,$ yields natural variations of these problems, see e.g.\ Open Problem \ref{speed}.

(e) (Poisson matching)  In 2-color Poisson matching \cite{HPPS09} one  is concerned with matching the points of one Poisson point process to the points of another such process in a translation-invariant way. In our model with assumption (PPP), $\infty$-color Poisson matching would correspond to
choosing infinitely many $f\in\I$ in a translation-invariant way.
This  will be made more precise in Open Problem \ref{rm}.
}
\end{rem}

\section{Results, proofs, and open problems}\label{inter}
Some of our results will be phrased in terms of the following random variables.
We denote for $x\in\X$ and $z\in\R$ by
\begin{eqnarray}
D^+_x(z)&:=&\inf\{y-z:\ y\ge z,\, y\in \mathcal Y_x\},\nonumber\\
D^-_x(z)&:=&\inf\{z-y:\ y\le z,\, y\in \mathcal Y_x\}, \quad\mbox{and}\nonumber \\
D_x(z)&:=&\inf\{|y-z|:\ y\in\Y_x\}=\min\{D^+_x(z),D^-_x(z)\}\nonumber
\end{eqnarray}
the distance of $z\in\R$ from $\Y_x\cap[z,\infty)$, $\Y_x\cap(-\infty,z]$, and $\Y_x$, respectively. 
\begin{lemma}\label{2} The probability that
$D_x(z)$ is finite for all $x\in\X$ and  all $z\in\R$ is 1. 
If (1STAT) holds then the distributions of  $D_x(z), D_x^+(z)$ and $D_x^-(z)$
do not depend on $z$ and $2D_x(0)$ has the same distribution as $D_x^+(0)$ and $D_x^-(0).$
\end{lemma}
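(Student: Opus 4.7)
The plan is to handle the three claims in sequence, reducing each to an elementary statement about the events $\{\Y_x \cap I \neq \emptyset\}$ for suitable intervals $I$, and then invoking the translation invariance (1STAT).

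For the first claim, I would observe that $D_x(z)$ is finite for \emph{some} $z$ if and only if $\Y_x \neq \emptyset$, in which case, since $\Y_x$ is nonempty, $D_x(z)$ is finite for \emph{every} $z \in \R$. Thus
\[
\{D_x(z) < \infty \text{ for all } z \in \R\} = \{\Y_x \neq \emptyset\}.
\]
By assumption (\ref{hab}), $P[\Y_x = \emptyset] = 0$ for each $x \in \X$, and since $\X$ is countable, a union bound gives
\[
P\bigl[\exists x \in \X,\, \exists z \in \R:\ D_x(z) = \infty\bigr] \le \sum_{x \in \X} P[\Y_x = \emptyset] = 0.
\]

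For the second claim, I would fix $t \ge 0$ and use that $\Y_x$ is closed to rewrite
\[
\{D_x^+(z) \le t\} = \{\Y_x \cap [z, z+t] \neq \emptyset\}, \qquad \{D_x(z) \le t\} = \{\Y_x \cap [z-t, z+t] \neq \emptyset\},
\]
and analogously for $D_x^-(z)$. Applying (1STAT) with the shift $c = z$ shows that the probability of each of these events is the same as the corresponding probability for $z = 0$. Hence the distributions of $D_x^\pm(z)$ and $D_x(z)$ do not depend on $z$.

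For the third claim, it suffices to show $P[2D_x(0) \le t] = P[D_x^+(0) \le t]$ for all $t \ge 0$ (the case of $D_x^-(0)$ follows by the same argument, or by symmetry). Using the rewritings above,
\[
P[2D_x(0) \le t] = P\bigl[\Y_x \cap [-t/2, t/2] \neq \emptyset\bigr], \qquad P[D_x^+(0) \le t] = P\bigl[\Y_x \cap [0,t] \neq \emptyset\bigr],
\]
and (1STAT) applied with $a=-t/2$, $b=t/2$, $c=t/2$ equates the two right-hand sides. There is no real obstacle here; the only point requiring care is that $D_x^+(z) \le t$ genuinely corresponds to $\Y_x$ meeting the \emph{closed} interval $[z,z+t]$, which is what makes the closedness assumption on $\Y_x$ useful.
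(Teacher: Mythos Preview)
Your proof is correct and follows essentially the same route as the paper: identify $\{D_x(z)<\infty\ \forall z\}=\{\Y_x\neq\emptyset\}$ and use (\ref{hab}) for the first claim, then rewrite the distribution functions of $D_x,D_x^\pm$ as hitting probabilities of closed intervals and shift via (1STAT). Your version is simply more detailed, and your remark on why closedness of $\Y_x$ is needed for the equality $\{D_x^+(z)\le t\}=\{\Y_x\cap[z,z+t]\neq\emptyset\}$ is a point the paper leaves implicit.
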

\begin{proof} The first statement follows from $\{\forall z\in\R: D_x(z)<\infty\}=\{\Y_x\ne\emptyset\}$ and (\ref{hab}).
If (1STAT) holds then
\[
P[D_x(z)\le t]=P[\Y_x\cap[z-t,z+t]\ne \emptyset]=P[\Y_x\cap[-t,t]\ne\emptyset]=P[D_x(0)\le t].
\]
Similarly, $P[D_x^+(z)\le t]=P[D_x^+(0)\le t]=P[D_x(0)\le t/2]$.
Analogous statements hold for $D_x^-$.
\end{proof}
\begin{ex}\label{paul}{\rm Under assumption (PPP) all three distances $D^+_x(z),D^-_x(z)$ and $D_x(z)$ are exponentially distributed with respective 
parameters $\la_x$, $\la_x$, and  $2\la_x$.}
\end{ex}
For comparison with Theorem \ref{cont} about continuous functions and as a warm-up we first consider bounded, but not necessarily continuous functions.
\begin{prop}\label{bou}{\rm \bf (Bounded functions)}
Assume (IND). If 
\begin{equation}\label{this}
\sup_{x\in\X}D_x(0)<\infty\quad\mbox{a.s.}
\end{equation}
then $P[B]=1$, otherwise $P[B]=0$.
\end{prop}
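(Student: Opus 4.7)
The plan is to reduce the event $B$ to the simpler event $E := \{\sup_{x\in\X} D_x(0) < \infty\}$ and then invoke Kolmogorov's $0$-$1$ law under (IND).

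First, I would verify that $B$ and $E$ coincide up to a $P$-null set. The inclusion $B\subseteq E$ is immediate: any bounded $f\in\I$ witnesses $D_x(0)\le \sup_{y\in\R}|f(y)|<\infty$ uniformly in $x\in\X$. For the reverse, I work on the full-probability event $\{\forall x\in\X:\Y_x\ne\emptyset\}$ supplied by (\ref{hab}). There, closedness of each $\Y_x$ lets me pick $y_x\in\Y_x$ realizing $|y_x|=D_x(0)$, and the function $f(x):=y_x$ on $\X$, $f\equiv 0$ off $\X$, is a bounded element of $\I$ as soon as $\sup_{x\in\X}D_x(0)<\infty$. This identification also justifies $B\in\F$: the event $E$ is measurable as a countable supremum of random variables, and completeness of $(\Om,\F,P)$ absorbs the null-set discrepancy with $B$.

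Second, I would establish the dichotomy $P[E]\in\{0,1\}$. By Lemma \ref{2} every $D_x(0)$ is a.s.\ finite, so for any finite $F\subset\X$ the event $E$ agrees up to null sets with $\{\sup_{x\in\X\setminus F}D_x(0)<\infty\}$, which lies in $\sigma(\Y_x:x\in\X\setminus F)$. Hence $E$ agrees up to null sets with an element of the tail $\sigma$-algebra of the independent family $(\Y_x)_{x\in\X}$, and Kolmogorov's $0$-$1$ law under (IND) yields $P[E]\in\{0,1\}$. The proposition then splits into cases: if (\ref{this}) holds, $P[B]=P[E]=1$; otherwise $P[E]<1$ is forced down to $0$, so $P[B]=0$.

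I do not anticipate any real obstacle; the entire content is the identification $B=E$ modulo null sets together with a standard tail-event argument. The two points demanding a little care are the closedness of $\Y_x$ (needed to realize $D_x(0)$ by an actual point of $\Y_x$) and the a.s.\ finiteness supplied by Lemma \ref{2} (needed both to pass from $E$ to $B$ and to make $E$ agree with a tail event up to null sets).
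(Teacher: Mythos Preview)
Your proof is correct and follows essentially the same approach as the paper: construct a bounded $f\in\I$ from a closest point $y_x\in\Y_x$ when $\sup_x D_x(0)<\infty$, observe that any bounded $f\in\I$ forces $\sup_x D_x(0)\le\|f\|_\infty$, and invoke Kolmogorov's zero-one law. You simply spell out in more detail the measurability of $B$ and why $E$ agrees up to null sets with a tail event, points the paper leaves implicit.
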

\begin{proof}
If (\ref{this}) holds then a bounded function $f\in\I$ can be  defined by setting $f(x)=0$ for $x\notin \X$ and choosing
$f(x)\in\Y_x$ with $|f(x)|=D_x(0)$ for all $x\in\X$. For the converse we note that any $f\in\I$ must satisfy $\|f\|_\infty\ge\sup_{x\in\X}D_x(0)$ and apply Kolmogorov's zero-one law.
\end{proof}
\begin{rem}\label{re1}{\rm By the  Borel Cantelli lemma (\ref{this}) is equivalent to  
\begin{equation} \label{sup}
\exists\ t<\infty\quad  \sum_{x\in\X}P[D_x(0)>t]<\infty.
\end{equation}
Note that (\ref{this}) and (\ref{sup}) do not depend on $\X$ but only on the distributions of the random variables $D_x(0)$,\ $x\in\X$.
}\end{rem}
\begin{ex}\label{ex1}{\rm Suppose (PPP) holds and $(x_n)_{n\in\N}$ enumerates $\X$. Then 
 $P[B]=1$ if $\log n=O(\la_{x_n})$ and $P[B]=0$ if $\la_{x_n}=o(\log n)$, cf.\ Example \ref{ex2}.}
\end{ex}
Theorem \ref{AA} immediately follows from the following result.
\begin{theorem}\label{cont}{\rm \bf (Continuous functions)} Assume (IND) and (1STAT).
If \begin{equation}\label{doc}
\forall m\in\N\quad \forall \eps>0\quad  \sum_{x\in\X, |x|\le m}P[D_x(0)>\eps]<\infty
\end{equation}
then  $P[C]=P[V\mbox{ is path-connected}]=1.$ Otherwise $P[C]=P[V\mbox{ is path-connected}]=0.$
\end{theorem}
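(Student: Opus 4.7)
The plan is to establish the chain of implications \((\ref{doc}) \Rightarrow P[C]=1 \Rightarrow V \text{ path-connected a.s.} \Rightarrow (\ref{doc})\), which yields the theorem. The workhorse in each direction is Borel-Cantelli applied under (IND) combined with (1STAT) and the 1-Lipschitz inequality \(|D_x(z)-D_x(z')|\le|z-z'|\), which lets one convert statements about fixed \(z=0\) into uniform-in-\(z\) statements by covering a bounded range with a finite rational \(\eps\)-net.

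For \((\ref{doc}) \Rightarrow P[C]=1\), I would use a L\'evy-style inductive construction. Enumerate \(\X=\{x_n\}_{n\ge 1}\) and build continuous piecewise-linear \(f_n\) converging uniformly on compacts to \(f\). At step \(n\), adjust \(f_{n-1}\) in a small neighborhood of \(x_n\) by an amount at most a summable \(\delta_n\) (say \(2^{-n}\)) so that \(f_n(x_n)\in\Y_{x_n}\); this succeeds provided \(D_{x_n}(f_{n-1}(x_n))<\delta_n\). The key sublemma is that under (\ref{doc}), (1STAT), (IND), almost surely, for every \(m\in\N\) and \(\eps,M>0\), only finitely many \(x\in\X\cap[-m,m]\) have \(\sup_{|z|\le M}D_x(z)>\eps\). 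This is proved by writing the sup as a max over a finite rational \((\eps/2)\)-net in \([-M,M]\) and applying Borel-Cantelli via \(\sum_x P[D_x(z)>\eps/2]=\sum_xP[D_x(0)>\eps/2]<\infty\). The finitely many exceptional \(x_n\) are absorbed by enlarging the early \(\delta_n\).

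For \(C\Rightarrow V\) path-connected, the graph of a continuous \(f\in\I\) is a homeomorphic copy of \(\R\) in \(V\), hence path-connected. Given any \(u=(x_0,y_0)\in V\): if \(x_0\notin\X\), the vertical segment in \(\{x_0\}\times\R\subseteq V\) reaches \((x_0,f(x_0))\); if \(x_0\in\X\) (so \(y_0\in\Y_{x_0}\)), re-run the construction of step (i) with the additional constraint \(g(x_0)=y_0\) imposed at step \(1\) to obtain a continuous \(g\in\I\) through \(u\), and link \(\mathrm{graph}(g)\) to \(\mathrm{graph}(f)\) via any column \(\{x\}\times\R\) with \(x\notin\X\), which lies fully in \(V\) and contains both \((x,f(x))\) and \((x,g(x))\).

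The main obstacle is the converse: \(\neg(\ref{doc}) \Rightarrow V\) not path-connected a.s. Suppose the sum diverges for some \(m,\eps\). By (1STAT), the divergence holds for every \(z\), and by (IND), Borel-Cantelli, and the Lipschitz argument, almost surely for every \(z\in\R\) the set \(B(z):=\{x\in\X\cap[-m,m]:D_x(z)>\eps/2\}\) is infinite. Assume toward contradiction a path \(\gamma:[0,1]\to V\) with \(\pi_1\gamma(0)<-m\) and \(\pi_1\gamma(1)>m\); then \(\pi_2\gamma\) is uniformly continuous on the compact set \((\pi_1\gamma)^{-1}[-m,m]\). Fix an accumulation point \(x^*\) of \(\X\cap[-m,m]\) and let \(t^*:=\inf\{t:\pi_1\gamma(t)=x^*\}\), \(y^*:=\pi_2\gamma(t^*)\). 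By intermediate value, for \(x_n\in\X\) approaching \(x^*\) from the side \(\pi_1\gamma\) approaches \(x^*\), there exist \(t_n\to t^*\) with \(\pi_1\gamma(t_n)=x_n\), and continuity forces \(\pi_2\gamma(t_n)\to y^*\); since \(\pi_2\gamma(t_n)\in\Y_{x_n}\), this yields \(D_{x_n}(y^*)\to 0\). The contradiction is obtained by choosing \(x^*\) to be an accumulation point of \(B(y^*)\)—circular, since \(y^*\) depends on \(x^*\). This is resolved by starting from any \(x^*\in\X_\infty\), computing the forced value \(y^*\), approximating it by a rational \(z\) within \(\eps/4\) (so \(D_x(z)>\eps/2\) on \(B(z)\) implies \(D_x(y^*)>\eps/4\)) and iterating: if \(x^*\notin\overline{B(z)}\) for some such \(z\), a suitable accumulation point \(\tilde x^*\) of \(B(z)\) is chosen instead, with new forced value \(\tilde y^*\) satisfying \(|\tilde y^*-y^*|\ge\eps/4\); boundedness of \(\pi_2\gamma\) restricts these values to a finite range while a fixed-point/compactness argument over rational targets in that range produces the contradiction.
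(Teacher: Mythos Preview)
Your forward direction is essentially the paper's: the key sublemma that, under (\ref{doc}), the sets $\{x\in\X\cap[-m,m]:\sup_{|z|\le M}D_x(z)>\eps\}$ are a.s.\ finite is exactly what the paper establishes (its sets $\X_K(\eps)$), and the L\'evy-style piecewise-linear construction is the same. Your linking argument for path-connectedness is a bit more elaborate than needed (the paper builds $f_{a_0,a_1,b_0,b_1}$ with prescribed endpoint values directly, so no auxiliary $g$ is required), but it is correct.

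The converse direction, however, has a genuine gap. Your a.s.\ event ``for every $z$ the set $B(z)$ is infinite'' is correct, but it is too weak to run the path-tracing argument, because the accumulation point of $B(z)$ may move with $z$. You recognize the circularity and propose an iteration, and indeed one can check that either the iteration terminates in a contradiction or the successive forced values satisfy $|\tilde y^*-y^*|\ge\eps/4$. But the latter alternative does not lead anywhere: the induced self-map $z\mapsto z'$ on a finite $\eps/8$-net of $[-M,M]$ (where $z'$ approximates the forced value $y^*(z)$) has no fixed point, yet it certainly has cycles, and a cycle $z_1\to z_2\to\cdots\to z_1$ with $|z_{i+1}-z_i|\ge 3\eps/8$ is perfectly consistent. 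No ``fixed-point/compactness'' argument over a finite set produces a contradiction from a fixed-point-free self-map. There is also a second loose end: for the path-tracing step you need $x^*$ to be a \emph{one-sided} accumulation point of $B(z)$ (left, say, so that the first hitting time argument applies), and ``$B(z)$ infinite'' does not guarantee this uniformly in $z$.

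The paper sidesteps the circularity entirely by a different idea: before looking at any realization or any path, it locates a \emph{deterministic} point $\xi\in[-m,m]$ at which the divergence of $\sum_x P[D_x(0)>\eps]$ concentrates, in the sense that $\sum_{x\in\X\cap(\xi-\delta,\xi)}P[D_x(0)>\eps]=\infty$ for every $\delta>0$ (after possibly reflecting $\X$). Because $\xi$ is fixed independently of the path, one can discretize the path's value at its first hit of $\xi$ to some $j\eps$ and its last excursion below $\xi$ to some $\xi-1/k$, obtaining a countable cover $C_2\subseteq\bigcup_{j,k}C_{j,k}$ by the explicit events $C_{j,k}=\{\forall x\in\X\cap(\xi-1/k,\xi):\ D_x(j\eps)\le\eps\}$. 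Each $C_{j,k}$ has probability $\prod_x(1-P[D_x(0)>\eps])=0$ by (IND), (1STAT) and the choice of $\xi$. The point is that the single deterministic $\xi$ serves as a common accumulation point of $\{x:D_x(j\eps)>\eps\}$ for \emph{all} $j$ simultaneously, which is precisely what your iteration was trying (and failing) to manufacture from the realization.
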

\begin{rem}{\rm Suppose $(x_n)_{n\in\N}$ enumerates $\X$. 
Then, by the  Borel Cantelli lemma, (\ref{doc}) is equivalent to 
\begin{equation}\label{red}
\forall m\in\N\ \lim_{n\to\infty}D_{x_n}(0)\won_{[-m,m]}(x_n)=0\quad\mbox{a.s..}
\end{equation}
Compare (\ref{doc}) to (\ref{sup}) and (\ref{red}) to (\ref{this}). Also note
 that if $\X$ is bounded then (\ref{doc}) does not depend on $\X$ itself but only on the distributions of the random variables $D_x(0)$,\ $x\in\X$.  
}
\end{rem}
\noindent\textit{Proof of Theorem \ref{cont}.}
Assume (\ref{doc}).  Then for all $y\in \R$ and all $\eps>0$ the set $\X(y,\eps):=\{x\in\X\mid D_x(y)\ge \eps/3\}$ is a.s.\ locally finite due to the Borel Cantelli lemma and Lemma \ref{2}. Therefore, for all $K,\eps>0$ the
set 
\[
\X_K(\eps):=\{x\in\X\mid \exists y\in[-K,K]: D_x(y)\ge \eps\}\\
\]
is a.s.\ locally finite as well since it is contained in the union of the sets $\X(y,\eps)$ with $y\in[-K,K]\cap (\eps/3) \Z$.
This together with (\ref{hab}) and the monotonicity of $\X_K(\eps)$ in $K$ and in $\eps$ 
implies the existence of a set $\Om'\subseteq \Om$ of full $P$-measure on which
\begin{equation}\label{ejp}
\forall K, \eps>0: \X_K(\eps)\ \mbox{is locally finite\quad and}\quad\forall x\in\X: \Y_x\ne \emptyset.
\end{equation} 
For the proof of the first statement of the theorem it suffices to show that on $\Om'$ there is for all $a_0, a_1, b_0, b_1\in\R$ with $a_0<a_1$ some continuous function $f=f_{a_0,a_1,b_0,b_1}:[a_0,a_1]\to\R$ with 
\begin{equation}\label{ggf}
f(a_0)=b_0,\quad f(a_1)=b_1\quad\mbox{and}\quad f(x)\in\Y_x\quad\mbox{for all $x\in\X\cap]a_0,a_1[$.} 
\end{equation} 
Indeed, this immediately shows that on $\Om'$ any two points in $V$ with differing first coordinates $a_0$ and $a_1$ can be connected by a continuous path inside $V$. Points in $V$ whose first coordinates coincide can be connected by the concatenation of two such paths.
Similarly,
$P[C]=1$ follows by concatenating the functions $f_{a_n,a_{n+1},0,0}$, where $(a_n)_{n\in\Z}$ is a strictly increasing double sided sequence in $\R\backslash \X$ with $a_n\to\infty$ and $a_{-n}\to-\infty
$ as $n\to\infty$. 

Therefore, let $a_0, a_1, b_0, b_1\in\R$ with $a_0<a_1$.
If $\X\cap]a_0, a_1[$ is finite then the existence of a continuous function $f_{a_0,a_1,b_0,b_1}$ satisfying (\ref{ggf}) is obvious. Now assume that $\X\cap]a_0, a_1[$ is infinite. Also fix a realization in $\Om'$. 
By (\ref{ejp}), 
\begin{equation}\label{M}
K:=\left(\sup\left\{D_x(0): x\in\X_1(1)\cap]a_0,a_1[\right\}\vee |b_0|\vee|b_1|\right)+2
\end{equation}
is finite. Here $\sup\emptyset:=0$. The sets $A_n, n\in\N_0$, recursively defined by  
\[A_{n}:=\left(\X_K(2^{-n})\cap]a_0,a_1[\right)\backslash \bigcup_{k=0}^{n-1}A_k\qquad(n\in\N_0),\]
are finite due to (\ref{ejp}), pairwise disjoint and satisfy
\begin{equation}\label{cup}
\bigcup_{n\ge 0}A_n=\bigcup_{n\ge 0}\X_K(2^{-n})\cap]a_0,a_1[= 
\left(\X\cap]a_0,a_1[\right)\backslash\{x\in\X : [-K,K]\subseteq\Y_x\}.
\end{equation}
We shall obtain the desired function $f=f_{a_0,a_1,b_0,b_1}$ as uniform limit of a sequence  $(f_n)_{n\ge 0}$ of continuous functions which satisfy for all $n\ge 0$,
\begin{equation}\label{von}
f_{n}:[a_0,a_1]\to\left(K-2+\sum_{k=0}^{n-1}2^{-k}\right)\left[-1,1\right]\subseteq [-K,K],
\end{equation}
\begin{equation}\label{ab}
f_n(a_0)=b_0,\quad f_n(a_1)=b_1,
\end{equation}
\begin{equation}\label{ag}
f_n(x)\in\Y_x\quad\mbox{for all $x\in A_n$,}
\end{equation}
\begin{equation}\label{ag2}
f_n(x)=f_{n-1}(x)\quad\mbox{for all  $A_0\cup\ldots\cup A_{n-1}$ if $n\ge 1$,\quad\mbox{and}}
\end{equation}
\begin{equation}\label{ofer}
\|f_{n}-f_{n-1}\|_\infty\le 2^{-n+1}\quad\mbox{if $n\ge 1$}.
\end{equation}
We construct this sequence recursively.
For $n=0$ set $f_0(a_0)=b_0$, $f_0(a_1)=b_1$ and choose $f_0(x)\in \Y_x$
for all $x\in
A_0$ such that $|f_0(x)|=D_x(0)$. 
This defines $f_0(x)$ for finitely many $x$. By linearly interpolating in between these values we get a continuous piecewise linear function $f_0$ satisfying (\ref{von})--(\ref{ofer}) with $n=0$. 

Now let $n\ge 0$ and assume that we have already constructed a continuous function $f_n$ which fulfils (\ref{von})--(\ref{ofer}).
We then set $f_{n+1}(x)=f_n(x)$ for all $x\in \{a_0,a_1\}\cup A_0\cup\ldots\cup A_n$.
For all $x\in A_{n+1}$, which again is a finite set, we choose $f_{n+1}(x)$ as some element of $\Y_x$ with distance $D_x(f_n(x))$ from $f_n(x)$
and again interpolate linearly to obtain a piecewise linear continuous function $f_{n+1}$, see Figure  \ref{c2} (a).
\begin{figure}[t]
 \psfrag{a}{$f_n$}
 \psfrag{b}{$f_{n+1}$}
\psfrag{c}{$x\in A_n$}
\psfrag{d}{$x\in A_{n+1}$}
\epsfig{figure=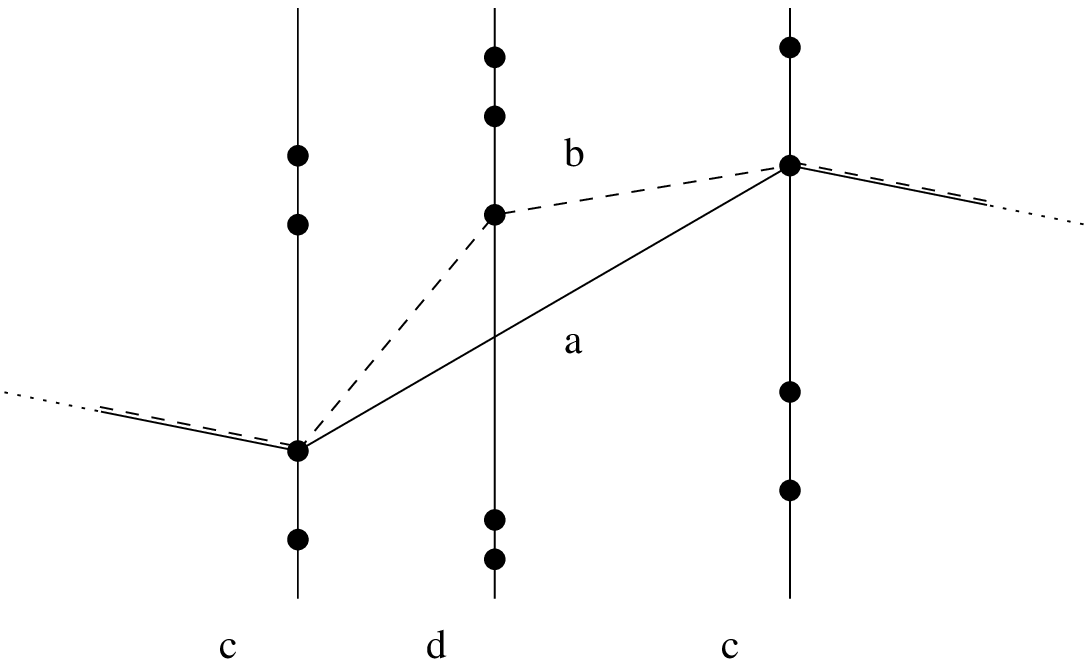,height=135pt}\hspace*{5mm}
 \psfrag{e}{$\eps$}
 \psfrag{bb}{$\xi$}
\psfrag{xt}{$f(\tau_f)$}
\psfrag{xs}{$f(\si_f)$}
\epsfig{figure=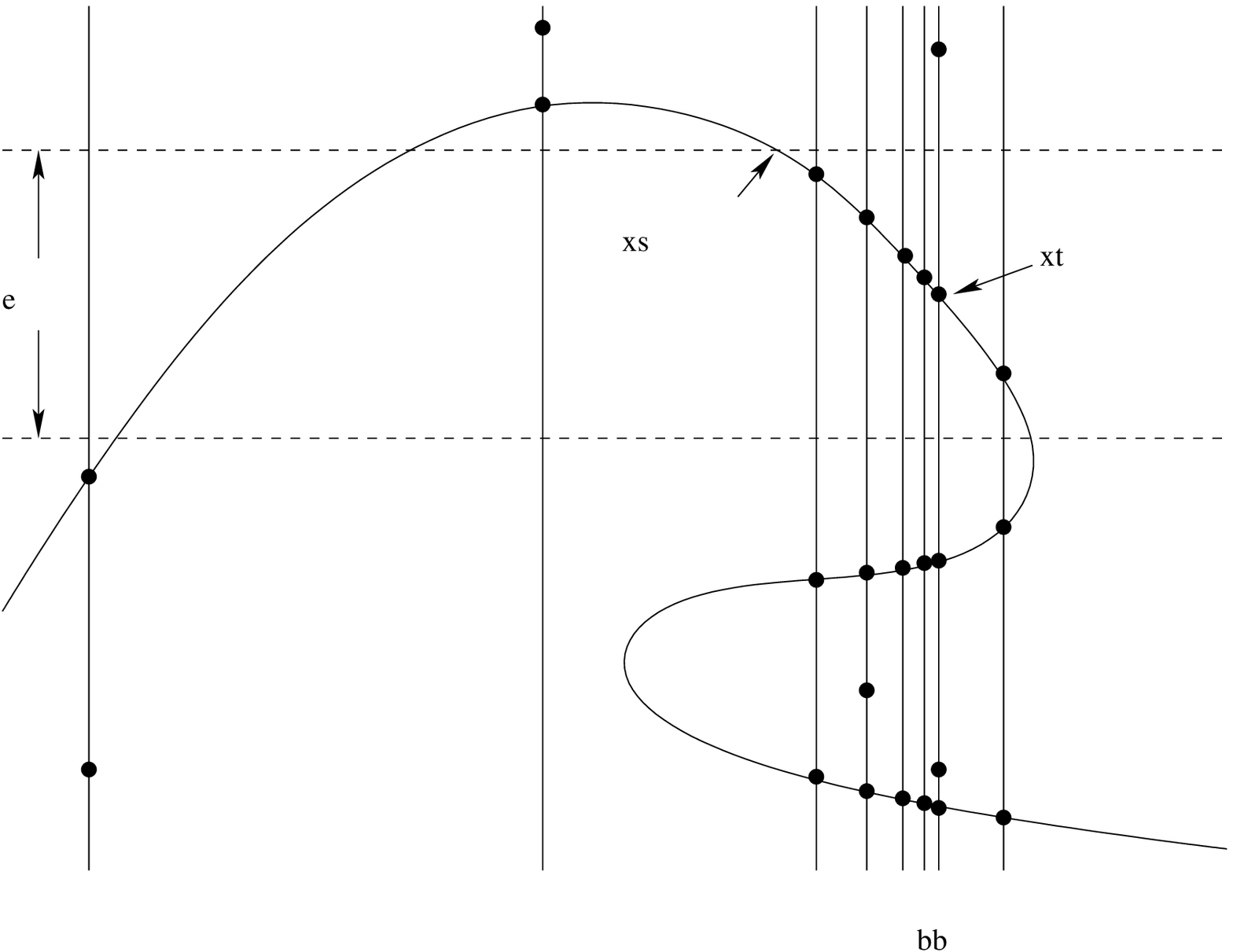,height=135pt}
\caption{\footnotesize For the proof of Theorem \ref{cont} in case (PPP).
}\label{c2}
\end{figure} 
 Obviously, this function satisfies (\ref{ab})--(\ref{ag2}) with $n+1$ instead of $n$. To see that the same is true for (\ref{von}) and (\ref{ofer}) we note that 
for all $x\in A_{n+1}$, by (\ref{von}),  $f_n(x)\in[-K,K]$.
Therefore, $D_x(f_n(x))\le 2^{-n}$ since $x\notin \X_K(2^{-n})$. Hence
$|f_{n+1}(x)-f_{n}(x)|\le 2^{-n}$. Consequently, since we interpolated linearly,
\begin{equation}\label{max}
\|f_n-f_{n+1}\|_\infty=\sup \left\{|f_{n+1}(x)-f_{n}(x)|:\  x\in A_{n+1}\right\}\le 2^{-n},
\end{equation}
i.e.\ (\ref{ofer}) holds with $n+1$ instead of $n$. This together with (\ref{von}) implies that (\ref{von}) also holds for  $n+1$ instead of $n$.

Having finished the construction of $(f_n)_{n\ge 0}$, we see that it converges uniformly on $[a_0,a_1]$ due to (\ref{ofer})
to some function $f$. Since all the functions $f_n$ are continuous $f$ is continuous as well.
It also fulfils (\ref{ggf}) due to (\ref{cup})--(\ref{ag2}).\vspace*{2mm}
 
For the proof of the converse assume that there are  $m\in\N$ and $\eps>0$  such that 
$\sum_{x\in\X,|x|\le m}P[D_x(0)>\eps]=\infty.$
Since $[-m,m]$ is compact there is  $\xi\in[-m,m]$ such that for all $\delta>0$ the sum of the probabilities $P[D_x(0)>\eps]$ with $x\in \X\cap (\xi-\delta,\xi+\delta)$ diverges.
Without loss of generality we may assume that one can choose $\xi\in[-m,m]$ such that even
\begin{equation}\label{xi}
\sum_{x\in \X\cap(\xi-\delta,\xi)}P[D_x(0)>\eps]=\infty\quad\mbox{for all $\delta>0$.}
\end{equation}
(Otherwise replace $\X$ by $-\X$.) Denote by $C_2$ the event that there is a continuous function
$f=(f_1,f_2):[0,1]\to\R^2$ with $f_1(0)<\xi<f_1(1)$ whose graph is contained in $V$.  Note that $C\subseteq C_2$ and $\{V\mbox{ is path-connected}\}\subseteq C_2$. Therefore, it suffices to show $P[C_2]=0$.  For any such function $f$
define $\tau_f:=\min\{t\in [0,1]\mid f_1(t)=\xi\}.$
Since $f_1$ is continuous 
$\tau_f$ is well-defined. Next we set
$
\si_f:=\sup\left\{t\in[0,\tau_f]:\ |f_2(t)-f_2(\tau_f)|\ge \eps/2\right\}.
$
Since $f_2$ is continuous as well we have $0\le\si_f< \tau_f$, see Figure \ref{c2} (b).
Therefore, $f_1(\si_f)<\xi$ by definition of $\tau_f$.
Consequently,
\begin{eqnarray*}
C_2=\bigcup_{j\in\Z,k\in\N}\big\{\exists f\in C\left([0,1],\R^2\right)&:& f_1(0)<\xi<f_1(1),\ {\rm graph}(f)\subset V,\\
&&
|f_2(\tau_f)-j\eps|\le\eps/2,\ f_1(\si_f)<\xi-1/k\big\}.
\end{eqnarray*}
If  $f_1(\si_f)<\xi-1/k$ then there is by the intermediate value theorem for all $x\in\X\cap(\xi-1/k,\xi)$ some $t\in(\si_f,\tau_f)$ with $f_1(t)=x$. For such $t$ we have on the one hand by the definition of $\si_f$ that $|f_2(t)-f_2(\tau_f)|<\eps/2$  
and on the other hand $f_2(t)\in\Y_x$ since ${\rm graph}(f) \subset V$.
Therefore, by the triangle inequality,
\[
C_2\subseteq\bigcup_{j\in\Z,k\in\N}C_{j,k},\quad\mbox{where}\quad C_{j,k}:=\left\{\forall x\in\X\cap(\xi-1/k,\xi):\ D_x(j\eps)\le \eps\right\}\in\F.
\]
Consequently, it suffices to show that $P[C_{j,k}]=0$ for all $j\in\Z,k\in\N$. However, by (IND), (1STAT)
and Lemma \ref{2},
\[\hspace*{29mm}P[C_{j,k}]
=\prod_{x\in\X\cap(\xi-1/k,\xi)}
\left(1-P[D_x(0)> \eps]\right)\stackrel{(\ref{xi})}{=}0.\hspace*{29mm}\Box\]
Whether $V$ is path-connected or not thus depends on the parameters of the model.
One may wonder whether the same is true for the connectedness of $V$. This is not the case. $V$ is always connected as the following non-probabilistic statement shows when applied to $U=V$ and $X=\R\backslash\X$.
\begin{prop}{\bf (Connectedness)} \label{conn}
Let $U \subseteq \R^2$ with projection $\pi[U]=\R$ onto the first coordinate
and let $X\subseteq \R$ be dense in $\R$ with $X\times\R\subseteq U$. Then $U$ is connected.
\end{prop}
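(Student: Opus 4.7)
The plan is to proceed by contradiction and to reduce any hypothetical disconnection of $U$ to a disconnection of $\R$, which is absurd.

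Suppose $U=(U\cap G_1)\sqcup(U\cap G_2)$ for open sets $G_1,G_2\subseteq\R^2$ with both pieces nonempty and disjoint. First I would observe that, for every $x\in X$, the vertical line $\{x\}\times\R$ is contained in $U$ and is connected, so it must lie entirely in $G_1$ or entirely in $G_2$. This splits $X$ as $X_1\sqcup X_2$, where $X_i:=\{x\in X\mid\{x\}\times\R\subseteq G_i\}$. I would then thicken this partition to all of $\R$ by setting
\[
A_i:=\{a\in\R\mid\exists\,\eps>0:\ X\cap(a-\eps,a+\eps)\subseteq X_i\},\qquad i=1,2.
\]
By construction each $A_i$ is open, and density of $X$ in $\R$ makes $A_1\cap A_2=\emptyset$ (any point in the intersection would supply a nonempty neighbourhood of $X$ inside $X_1\cap X_2=\emptyset$).

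The main step, and the only one requiring any thought, is to verify that $A_1\cup A_2=\R$. Fix $a\in\R$; since $\pi[U]=\R$, choose $b\in\R$ with $(a,b)\in U$, and let $i\in\{1,2\}$ be such that $(a,b)\in G_i$. Because $G_i$ is open in $\R^2$, there exists $\eps>0$ with the Euclidean ball $B((a,b),\eps)\subseteq G_i$. Then for every $x\in X\cap(a-\eps,a+\eps)$ the point $(x,b)$ lies in $G_i$, which forces the whole line $\{x\}\times\R$ into $G_i$ and hence $x\in X_i$. Thus $a\in A_i$, so $\R=A_1\cup A_2$.

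Putting the pieces together, $\R=A_1\sqcup A_2$ is a partition of $\R$ into two disjoint open sets. Connectedness of $\R$ forces one of them, say $A_2$, to be empty. But then the same argument applied to any point of $U\cap G_2$ would place its first coordinate in $A_2$, so $U\cap G_2=\emptyset$, contradicting the assumption. The mildly delicate point is the coverage $A_1\cup A_2=\R$, where both $\pi[U]=\R$ and the openness of the $G_i$ in $\R^2$ (not merely in $U$) are used, while the density of $X$ enters only in the disjointness.
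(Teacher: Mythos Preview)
Your proof is correct. Both your argument and the paper's use the same three ingredients---connectedness of the vertical lines $\{x\}\times\R$ for $x\in X$, density of $X$, and connectedness of $\R$---but in opposite orders. The paper proceeds more directly: it observes that the projections $\pi[G_1]$ and $\pi[G_2]$ are open subsets of $\R$ whose union contains $\pi[U]=\R$; since $\R$ is connected these projections must overlap on a nonempty open set, and density of $X$ then produces some $x\in X$ lying in both projections. The vertical line $\{x\}\times\R\subseteq U$ is thereby split nontrivially by $G_1$ and $G_2$, contradicting its connectedness. Your route instead partitions $X$ first (via connectedness of the vertical lines), then constructs the auxiliary open sets $A_1,A_2$ and invokes connectedness of $\R$ at the end. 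The paper's version is a bit shorter since it avoids the thickening construction, while yours makes the role of each hypothesis slightly more explicit---in particular you note precisely where $\pi[U]=\R$ and the density of $X$ enter.
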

\begin{proof}
Assume that $U$ is not connected. Then there are non-empty open sets $\mathcal O_1, \mathcal O_2\subseteq \R^2$ such that 
$U\cap \mathcal O_1$ and $U\cap \mathcal O_2$ partition $U$.
 Since $\R=\pi[U]=\pi[\mathcal O_1]\cup\pi[\mathcal O_1]$ is connected and the 
sets $\pi[\mathcal O_1]$ and $\pi[\mathcal O_2]$ are both non-empty and open, the set $\pi[\mathcal O_1]\cap \pi[\mathcal O_2]$ is not empty either.
Since it is also open and $X$ is dense in $\R$ there is  
 $x\in X\cap \pi[\mathcal O_1]\cap\pi[\mathcal O_2]$. For any $i=1,2$, $U_i:=(\{x\}\times\R)\cap\mathcal O_i\ne\emptyset$ because of  $x\in\pi[\mathcal O_i]$. Moreover,  $\{x\}\times\R\subseteq U$  due to
$X\times\R\subseteq U$. Therefore, $U_1$ and $U_2$ partition $\{x\}\times\R$ and are non-empty and open in  $\{x\}\times\R$. This is a contradiction since  $\{x\}\times\R$ is connected.
\end{proof}
\begin{ex}\label{bb}
{{\bf (Interpolation by Brownian motion)}. \rm   The construction of an interpolating continuous function in the proof of Theorem \ref{cont}, see in particular Figure \ref{c2} (a), resembles Paul L\'{e}vy's
construction of Brownian motion, see e.g.\ \cite[Chapter 1.1.2]{MP10}. 

We shall show that one can indeed choose
$\X\subseteq (0,1]$ and, in a non-trivial way,  $(\Y_x)_{x\in\X}$ 
and then construct by L\'{e}vy's method a Brownian motion $(B_x)_{x\in[0,1]}$ on $(\Om,\F,P)$ such that a.s.\ $(B_x)_{x\in\R}\in\I$. (Here $B_x:=0$ for $x\notin[0,1]$.)
 Like in L\'{e}vy's construction we let $\X$ be the set of dyadic numbers in $(0,1]$, namely the disjoint union
$\X:=\X_{-1}\cup\bigcup_{n\ge 1}\mathcal X_n$, where $X_{-1}:=\{1\}$ and $\X_n:=\big\{k2^{-n}\ \big|\ 1\le k< 2^n,\ \mbox{$k$ is odd}\big\}$ for $n\ge 1$.
For $n=-1,1,2,3,\ldots$ and $x\in\X_n$ let $\Y_x\subset \R$ be a two-sided stationary (in the sense of (STAT), see e.g.\ \cite[Theorem 9.9.1]{KT75}) renewal processes with i.i.d.\ interarrival times whose cumulative distribution function is given by
$F_n(t):= 1-\exp\left(-2^{n-2}t^2\right)$ for $t\ge 0$ and whose probability density function we denote by $f_n$.
Furthermore, we assume (IND). For $x\in\X$ and $z\in\R$ we denote by $Y_x(z)$ the a.s.\ unique element of $\Y_x$ with minimal distance to $z$,  i.e.\ distance $D_x(z)$. 
We then recursively define $B_x$ for $x\in\{0\}\cup\X$ as follows: We set $B_0:=0$ and $B_1:=Y_1(0)$. Having defined
$B_x$ for $x\in \X_m, m<n,$ we set
\[B_x:=Y_x\left(\frac{B_{x-2^{-n}}+B_{x+2^{-n}}}{2}\right)\]
for $x\in\X_n$.
By Lemma \ref{2} and \cite[Theorem 9.9.1]{KT75} for all $x\in\X_n$ and $z\in\R$,
\begin{equation}\label{hh}P[D_x(z)>t]=P[D_x^+(0)>2t]=\frac{\int_{2t}^\infty\int_0^\infty f_n(x^++x^-)\, dx^-\, dx^+}{\int_{0}^\infty\int_0^\infty f_n(x^++x^-)\, dx^-\, dx^+}.
\end{equation}
Since 
$\int_0^\infty f_n(x^++x^-)\, dx^-=1-F_n(x^+)$ and 
\[\int_{2t}^\infty1-F_n(x^+)\ dx^+\ =\ \int_{2t}^\infty e^{-2^{n-2}(x^+)^2}\ dx^+\ =\ 2^{(1-n)/2}\int_{2^{(n+1)/2}t}^\infty e^{-s^2/2}\ ds\]
the quantity in (\ref{hh}) is equal to $P\left[|Z|\ge 2^{(n+1)/2}t\right],$
where $Z$ is a standard normal random variable.
Consequently, by symmetry $Y_x(z)-z$ is normally distributed with mean 0 and variance $2^{-n-1}$ as it should for L\'{e}vy's construction. Continuing as in the proof of \cite[Theorem 1.3]{MP10} the function $(B_x)_{x\in\X}$ can be a.s.\ extended to a standard Brownian motion $(B_x)_{x\in[0,1]}$.} 
\end{ex}
\begin{theorem}\label{mono}{\rm \bf (Increasing, bounded functions)}
Assume (IND) and (1STAT). Then  $P[M]=1$ if 
$\sum_{x\in\X}D_x^+(0)<\infty$ a.s.\  and  $P[M]=0$ otherwise.
\end{theorem}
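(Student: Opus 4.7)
The plan is to build, under $\sum_{x}D_x^+(0)<\infty$ a.s., a canonical ``greedy'' element $f^\ast\in\I$ as a monotone limit of finite approximations; the same construction then handles the converse via a minimality comparison.

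Enumerate $\X=\{x_1,x_2,\ldots\}$, put $\X_n:=\{x_1,\ldots,x_n\}$, and relabel $\X_n$ in $\R$-order as $z_1^{(n)}<\cdots<z_n^{(n)}$. Define $f_n\colon\X_n\to\R$ recursively from the left by $f_n(z_0^{(n)}):=0$ and
\[f_n(z_i^{(n)}):=\min\bigl\{y\in\Y_{z_i^{(n)}}\mid y\ge f_n(z_{i-1}^{(n)})\bigr\},\]
which is well-defined because $\Y_x$ is closed and, by (1STAT) together with (\ref{hab}), a.s.\ unbounded above. The function $h_x(t):=t+D_x^+(t)$ (the smallest element of $\Y_x\cap[t,\infty)$) is non-decreasing in $t$; iterating this through the recursion while splitting into the cases where $x_{n+1}$ lies to the left, inside, or to the right of $\X_n$ shows that for every $x\in\X_n$ the sequence $(f_m(x))_{m\ge n}$ is non-decreasing. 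Hence $f^\ast(x):=\sup_nf_n(x)\in[0,\infty]$ exists, and whenever finite lies in the closed set $\Y_x$.

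The central distributional identity is
\[F_n:=f_n(z_n^{(n)})\;\stackrel{d}{=}\;\sum_{x\in\X_n}D_x^+(0).\]
Writing $Z_i:=D_{z_i^{(n)}}^+(f_n(z_{i-1}^{(n)}))$, one has $F_n=\sum_{i=1}^nZ_i$; moreover $f_n(z_{i-1}^{(n)})$ is measurable with respect to $\F_{i-1}:=\sigma(\Y_{z_1^{(n)}},\ldots,\Y_{z_{i-1}^{(n)}})$, while $\Y_{z_i^{(n)}}$ is independent of $\F_{i-1}$ by (IND). By Lemma \ref{2}, under (1STAT) the law of $D_x^+(t)$ does not depend on $t$; so the conditional law of $Z_i$ given $\F_{i-1}$ is the unconditional law of $D_{z_i^{(n)}}^+(0)$. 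By induction, $(Z_i)_{i\le n}$ are independent with $Z_i\sim D_{z_i^{(n)}}^+(0)$, giving the identity. Since $F_n$ is non-decreasing in $n$ and $\sum_{x\in\X_n}D_x^+(0)\uparrow S:=\sum_{x\in\X}D_x^+(0)$, continuity of probability yields
\[P\!\left[\sup_nF_n>B\right]=P[S>B]\qquad(B>0).\]

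Both conclusions follow. If $S<\infty$ a.s.\ then $\sup_nF_n<\infty$ a.s., so $f^\ast$ is a.s.\ bounded and non-decreasing on $\X$ with $f^\ast(x)\in\Y_x$; extending $f^\ast$ to an increasing bounded function $f\colon\R\to\R$ produces a member of $M$, so $P[M]=1$. Conversely, suppose $P[M]>0$; then $M_{A,B}:=\{\exists g\in\I:\,g\text{ is increasing and }g(\R)\subseteq[A,B]\}$ has positive probability for some rational $A<B$. Rerun the construction starting from $A$ in place of $0$; an induction on $i$ shows $f_n(z_i^{(n)})\le g(z_i^{(n)})\le B$ on $M_{A,B}$, hence $\sup_nF_n\le B$ there, and the identity (with $0$ replaced by $A$) gives $P[S\le B-A]\ge P[M_{A,B}]>0$, which contradicts $S=\infty$ a.s. The main subtlety is the conditional-independence step: despite the recursion's Markov look, (IND) combined with the translation invariance of (1STAT) makes each $Z_i$ distributionally independent of its past, and this is precisely what forces the greedy sum to share the law of $\sum_{\X_n}D_x^+(0)$.
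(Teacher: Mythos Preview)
Your argument is essentially the paper's own proof: you build the same greedy increasing step functions $f_n$, identify $F_n=f_n(z_n^{(n)})$ in law with $\sum_{x\in\X_n}D_x^+(0)$ via (IND), (1STAT) and Lemma~\ref{2}, and pass to the monotone limit $f^\ast=\sup_n f_n$; for the converse you use the same minimality comparison $f_n\le g$ on $M_{A,B}$. One small omission: your converse shows $P[M]>0\Rightarrow P[S<\infty]>0$, but the theorem's ``otherwise'' hypothesis is merely $P[S=\infty]>0$, not $S=\infty$ a.s.; the paper closes this by explicitly invoking Kolmogorov's zero-one law for the independent family $(D_x^+(0))_{x\in\X}$ to get $P[S<\infty]\in\{0,1\}$, and you should do the same.
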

\begin{proof}
Let $(x_n)_{n\ge 0}$ enumerate $\X$.  
For $n\ge 0$ we denote by $\ph_n$ the permutation of $\{0,\ldots,n\}$ for which 
$\left(x_{\ph_n(i)}\right)_{0\le i\le n}$ is strictly increasing.

To prove the first assertion assume that $(D^+_x(0))_{x\in \X}$ is a.s.\ summable. 
For $y\in\R$ and $n\ge 0$ we denote  by $u_n(y):=y+D^+_{x_n}(y)$ the smallest element of $\Y_{x_n}$ which is $\ge y$. 
For $n\ge 0$ we define the function $f_n:\R\to[0,\infty)$ 
by
$f_n(x):=0$ if $x< x_{\ph_n(0)}$ and
\[f_n(x):=u_{\ph_n(i)}\left(u_{\ph_n(i-1)}\left(\ldots \left(u_{\ph_n(1)}\left(u_{\ph_n(0)}(0)\right)\right)\ldots\right)\right)\]
if $0\le i<n$ and $x_{\ph_n(i)}\le x< x_{\ph_n(i+1)}$ or if $i=n$ and $x_{\ph_n(n)}\le x$,  see Figure \ref{monof}. 
\begin{figure}[t]
 \psfrag{a}{$x_{\ph_4(0)}$}
 \psfrag{b}{$x_{\ph_4(1)}$}
\psfrag{c}{$x_{\ph_4(2)}$}
\psfrag{d}{$x_{\ph_4(3)}$}
\psfrag{e}{$x_{\ph_4(4)}$}
\psfrag{f}{$f_4$}
\epsfig{figure=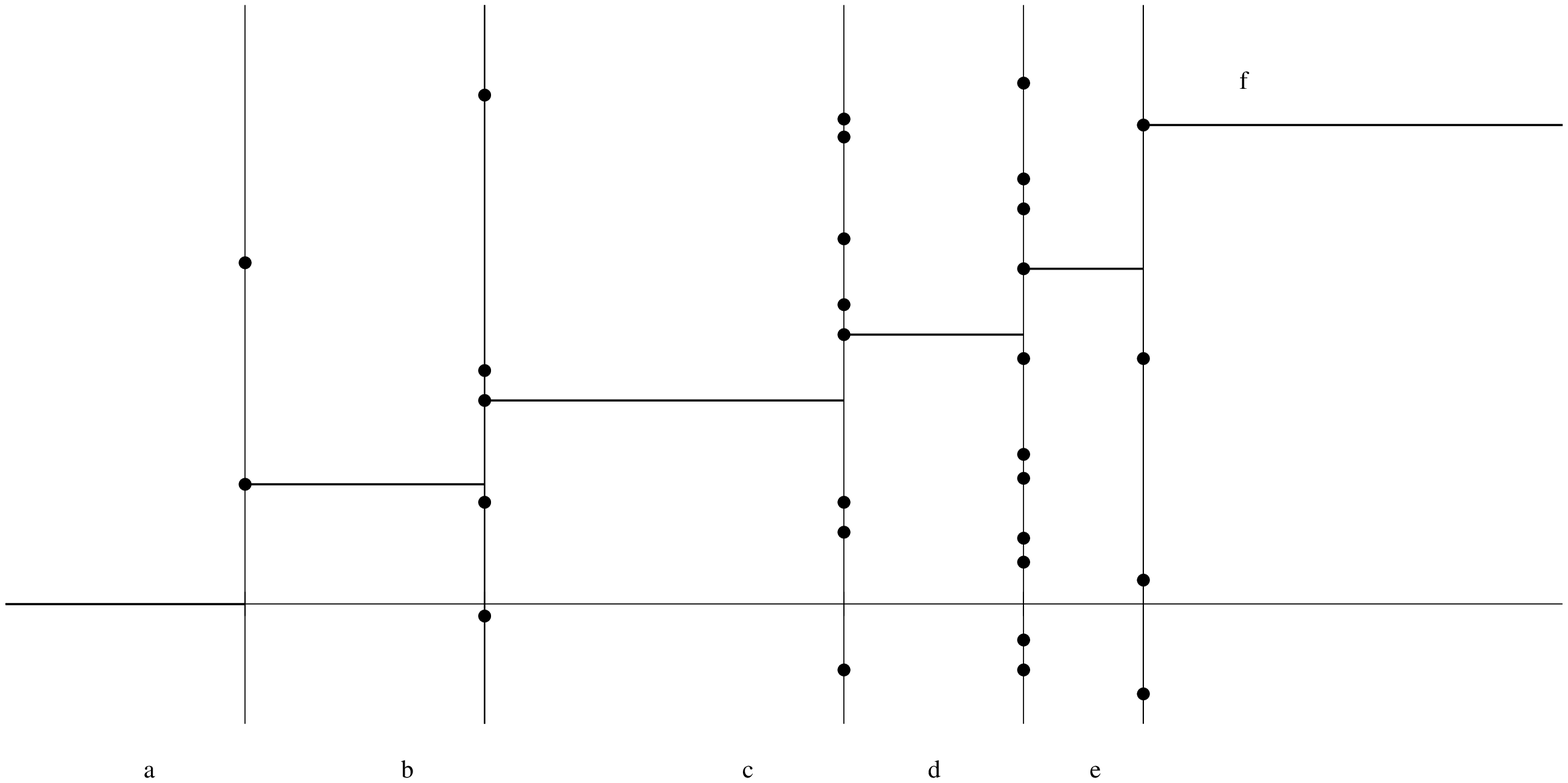,height=140pt}\hspace*{5mm}
\caption{\footnotesize $f_n$ is the smallest increasing function $\R\to[0,\infty)$ with $f_n(x_i)\in\Y_{x_i}$ for all $0\le i\le n$. Here $n=4$ and (PPP) hold.}\label{monof}
\end{figure}
Then $f:=\sup_{n}f_n$
satisfies the requirements formulated in the definition of $M$, as we shall explain now.

Firstly, $f$ is increasing since each $f_n$ is increasing. 
Secondly, $f$ is a.s.\ bounded. Indeed,
due to (IND), (1STAT)  and Lemma \ref{2}, $\|f_n\|_\infty=f_n(x_{\ph_n(n)})$ has the same distribution as $D_{x_0}^+(0)+\ldots+D_{x_n}^+(0)$. Moreover, $(f_n(x))_{n\ge 0}$ is for each $x\in\R$  an increasing sequence.
Consequently, $\|f_n\|_\infty$ increases for $n\to\infty$ towards $\|f\|_\infty$. Hence $\|f\|_\infty$ has the same distribution as $\sum_{x\in\X}D^+_x(0)$ and is therefore  a.s.\ finite by assumption.  
Thirdly, we have for all $i\ge 0$ that $f(x_i)\in \mathcal Y_{x_i}$ a.s.\ since $f_n(x_i)\in \mathcal Y_{x_i}$ for all $n\ge i$ and $\mathcal Y_{x_i}$ is  closed. 

For the proof of the second statement of the theorem assume that $P[\sum_{x} D^+_x(0)=\infty]>0$.
For all $a,b\in\Z$ with $a<b$ define
\[ M_{a,b}:=\left\{
\exists f\in\I:\ f\mbox{ is increasing},\ f[\R]\subseteq [a,b]\right\}.
\]
Note that $M$ is the union of all $M_{a,b}$, $a<b$. Therefore, it suffices to show that $P[M_{a,b}]=0$ for all $a<b$.
For all $n\ge 0$,
\begin{eqnarray*}
M_{a,b}&\subseteq&\left\{\exists f:\R\to[a,b]:\ f\mbox{ is increasing, }\forall\ 0\le i\le n\ f(x_i)\in \mathcal Y_{x_i}
\right\}.
\end{eqnarray*}
Any increasing $f:\R\to[a,b]$ with $f(x_i)\in \mathcal Y_{x_i}$ for all $0\le i\le n$ 
must satisfy
\[
f(x_{\ph_n(i)})\ge S^{(n)}_i:=\inf\left\{y\ge S_{i-1}^{(n)}\ \big|\ y\in \mathcal Y_{\ph_n(i)}\right\} \qquad(0\le i\le n),
\]
where $S_{-1}^{(n)}:=a$.
Therefore, $M_{a,b}\subseteq\{S^{(n)}_n\le b\}\in\F$ for all $n\in\N$. Hence,
by (IND), (1STAT) and Lemma \ref{2}, 
\[P[M_{a,b}]\le \inf_{n\in\N}P\left[S^{(n)}_n\le b\right]=\inf_{n\in\N}P\left[\sum_{i=0}^nD^+_{x_i}(0)\le b-a\right]\le
P\left[\sum_{x\in\X}D^+_{x}(0)<\infty\right],\]
which is, by assumption, strictly less than 1 and therefore, by Kolmogorov's zero-one law, equal to 0.
\end{proof}
\begin{rem}\label{+-}{\rm In Theorem \ref{mono} one can replace $D_x^+(0)$ by $D_x(0)$ due to 
Lemma \ref{2}.}
\end{rem}
\begin{ex}\label{3}{\rm Assume (PPP). Then, by the three series theorem and Theorem \ref{mono},
 $P[M]=1$ if 
\begin{equation}\label{co}
\sum_{x\in\X}\frac{1}{\la_x}<\infty
\end{equation} and
$P[M]=0$ else. In particular, (\ref{co}) implies $P[BV]=1$ since $M\subseteq BV$. For the reverse implication we have the following partial result.}
\end{ex}
\begin{theorem}{\bf (Functions of bounded variation)}\label{bv} Assume (PPP) and $\X=\N$. Then   $P[BV]=0$ if any one of the following conditions {\rm (\ref{a})--(\ref{c})} holds:
\begin{eqnarray}
\label{a}&& (\la_n)_{n\in\N}\quad\mbox{is increasing and}\quad \sum_{n\in\N}\frac{1}{\la_n}=\infty.\\
\label{b}&& \liminf_{n\to\infty}\la_n<\infty.\\
\label{c}&& \lim_{n\to\infty}\la_n=\infty\quad\mbox{and}\quad
\limsup_{n\to\infty}\frac{n}{\la_{(n)}}>0,\quad\mbox{where $(\la_{(n)})_{n\in\N}$ is an  }\\
\nonumber&&\mbox{increasingly ordered permutation of $(\la_n)_{n\in\N}$.}
\end{eqnarray}
\end{theorem}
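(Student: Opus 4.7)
My approach splits by case, using throughout that every $f:\R\to\R$ of bounded variation is bounded (since $|f(x)-f(0)|$ is dominated by the total variation of $f$), so that $BV\subseteq B$.

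For case~(\ref{b}), I would show $P[B]=0$. Under $\liminf_n\la_n<\infty$ pick a constant $C<\infty$ and a subsequence $(n_k)$ with $\la_{n_k}\le C$; by Example~\ref{paul}, $P[D_{n_k}(0)>t]=e^{-2\la_{n_k}t}\ge e^{-2Ct}$ for every $t>0$, so $\sum_{n\in\N}P[D_n(0)>t]=\infty$. Remark~\ref{re1} shows that (\ref{sup}) fails, and Proposition~\ref{bou} gives $P[B]=0$, hence $P[BV]=0$.

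For cases~(\ref{a}) and~(\ref{c}), the subcase in which $(\la_n)$ is bounded is already covered by~(b), so I may assume $\la_n\to\infty$. My plan is to recast $BV$ as a first-passage percolation problem on a random tree, following the method of \cite{PemPer94}. Since BV functions are bounded, $BV=\bigcup_K BV_K$, where
\[BV_K:=\Big\{\exists\,(y_n)_{n\ge 1}\in\textstyle\prod_{n\ge 1}(\Y_n\cap[-K,K]):\,\sum_{n\ge 1}|y_{n+1}-y_n|\le K\Big\},\]
and it suffices to show $P[BV_K]=0$ for every $K\in\N$. View finite admissible sequences $(y_1,\ldots,y_n)$ as paths in a random tree whose level-$n$ vertices are the points of $\Y_n\cap[-K,K]$ and whose edge from $y$ to $y'$ carries weight $|y-y'|$; let $N_n(K)$ count such paths with total weight at most $K$. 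Campbell's theorem for independent Poisson processes, together with the volume $(2K)^{n-1}/(n-1)!$ of the one-dimensional $\ell^1$-ball of radius $K$, yields
\[\EE[N_n(K)]\le\frac{(2K)^n}{(n-1)!}\prod_{k=1}^n \la_k,\]
and Markov's inequality gives $P[BV_K]\le\liminf_n \EE[N_n(K)]$. Under~(\ref{c}), I would pass to the subsequence $(n_j)$ with $\la_{(n_j)}\le cn_j$ and sparsify the tree by keeping only the $n_j$ levels where $\la_k\le cn_j$; restricting the BV constraint to this sublattice still yields the same type of estimate, with the product bounded by $(cn_j)^{n_j}$, and Stirling then gives $\lesssim(2Kce)^{n_j}$, vanishing for $K<1/(2ec)$. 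Under~(\ref{a}), monotonicity of $(\la_n)$ together with Cauchy condensation applied to $\sum 1/\la_n=\infty$ provides an analogous dyadic-subsequence estimate.

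The main obstacle is removing the smallness assumption on $K$: in the marginal regimes $\la_n\sim n$ (which satisfies (\ref{c})) or $\la_n\sim n\log n$ (which satisfies (\ref{a})), the first-moment quantity $\EE[N_n(K)]$ does not vanish once $K$ is large. This is the hardest point; I expect the resolution to proceed via a conditional second-moment / Paley--Zygmund argument \`a la \cite{PemPer94} for first-passage percolation on spherically symmetric trees. The key input is that two paths in the tree with close endpoints share long prefixes, making the indicators summed in $N_n(K)$ strongly positively correlated; Cauchy--Schwarz then bounds $\EE[N_n(K)^2]$ well enough to force $P[N_n(K)\ge 1]\to 0$ directly (rather than merely $\EE[N_n(K)]\to 0$) for each $K$, completing the proof of $P[BV_K]=0$ and hence of $P[BV]=0$.
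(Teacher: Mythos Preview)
Your treatment of case~(\ref{b}) via $BV\subseteq B$ and Proposition~\ref{bou} is correct and in fact slightly more direct than the paper's, which instead reduces~(\ref{b}) to case~(\ref{a}) by monotonicity (Remark~\ref{mm}) after restricting to an infinite subset $\X'\subseteq\N$ on which $\la_x<c$.

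For cases~(\ref{a}) and~(\ref{c}), however, your plan has a genuine gap at exactly the point you flag as hardest, and the proposed fix goes in the wrong direction. A second-moment or Paley--Zygmund argument yields a \emph{lower} bound $P[N_n\ge 1]\ge (\EE N_n)^2/\EE[N_n^2]$; it cannot improve on the first-moment upper bound $P[N_n\ge 1]\le \EE N_n$. Strong positive correlation among your path indicators makes $\EE[N_n^2]$ large relative to $(\EE N_n)^2$, which is precisely the input for proving $P[N_n\ge 1]$ is bounded \emph{away} from~$0$, not that it tends to~$0$. In \cite{PemPer94} the second-moment method is used for the explosion direction; the non-explosion direction (the analogue of $P[BV]=0$) is handled by a Chernoff-type large-deviation bound, not by Paley--Zygmund. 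So once $\EE[N_n(K)]$ fails to vanish---as it does for large $K$ in the regimes $\la_n\sim n$ or $\la_n\sim n\log n$---your scheme offers no route to $P[BV_K]=0$.

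The paper resolves this by a different reduction that eliminates the $K$-dependence entirely. Lemma~\ref{BV2} shows that if some $f\in\I$ has bounded variation, then so does the ``nearest-neighbour'' path that at each step~$n$ jumps from its current position $g_{n-1}$ to the closest point of $\Y_n$ either above or below $g_{n-1}$. This replaces your random Poisson-branching tree by a deterministic \emph{binary} tree indexed by $\{+,-\}^n$, with edge weights $D_n^{\pm}(g_{n-1})\sim\mathrm{Exp}(\la_n)$. On this tree there are exactly $2^n$ paths of length~$n$; a Chernoff bound together with an Abel-summation reweighting (see (\ref{rn})--(\ref{su})) shows that the minimal weighted path length tends to~$\infty$ a.s.\ whenever $(\la_n)$ is increasing with $\sum 1/\la_n=\infty$. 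Case~(\ref{c}) is then reduced to~(\ref{a}) via Lemma~\ref{impl} and monotonicity, not by a separate counting argument. The essential idea you are missing is this binary-tree reduction: it is what makes the large-deviation bound uniform and removes the first-moment blow-up that your Poisson-vertex tree suffers from.
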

\begin{problem}\label{nlogn}{\rm
(a) Theorem \ref{bv} and Example \ref{3} do not determine $P[BV]$ in some  cases in which $(\la_n)_{n\ge 0}$ is a permutation of, say, $(n\log n)_{n\ge 2}$.
(b) Can Theorem \ref{bv} be extended to $\X$ whose closure contains an interval?} 
\end{problem}
The following non-probabilistic lemma is needed for case (\ref{c}).
\begin{lemma}\label{impl}
Let $(\mu_n)_{n\ge 0}$ be a sequence of positive  numbers which monotonically decreases to 0.
Then 
\begin{equation}\label{posi}
\limsup_{n\to\infty}\ n\mu_{n}>0
\end{equation}
implies
\begin{equation}\label{mini}
\forall \ph:\N_0\to\N_0\ \mbox{bijective}\quad \exists M\subseteq \N_0\quad  \sum_{n\in M}
 \min_{m\in M:\ m\le n}\mu_{\ph(m)}=\infty.
\end{equation}
\end{lemma}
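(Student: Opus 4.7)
The plan is to rephrase \eqref{posi} as a quantitative lower bound on the level sets of $\mu$, and then to build $M$ inductively as a disjoint union of finite blocks, each contributing a uniform positive amount to the sum in \eqref{mini}.

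Set $N(\eps) := |\{n \in \N_0 : \mu_n \ge \eps\}|$. Because $(\mu_n)$ is decreasing, evaluating at $\eps = \mu_n$ gives $n \le N(\mu_n)$, so \eqref{posi} is equivalent to the existence of a constant $c > 0$ and a sequence $\eps_k \downarrow 0$ with $N(\eps_k) \ge c/\eps_k$. Fix a bijection $\ph : \N_0 \to \N_0$ and write $\nu_m := \mu_{\ph(m)}$; since $\ph$ is a bijection, $|\{m : \nu_m \ge \eps\}| = N(\eps)$ for every $\eps > 0$. I will build $M = \bigsqcup_{i \ge 0} A_i$ with finite blocks satisfying $\max A_i < \min A_{i+1}$ as follows. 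Given $A_0,\ldots,A_{i-1}$, let $R_i := \max(A_0 \cup \cdots \cup A_{i-1})$ and $a_i := \min\{\nu_m : m \in A_0 \cup \cdots \cup A_{i-1}\}$, with the conventions $R_0 := -1$ and $a_0 := +\infty$. Choose $k_i$ so large that
\[
\eps_{k_i} \le \min\bigl(a_i/2,\ c/(2(R_i+1))\bigr),
\]
and set $A_i := \{m > R_i : \nu_m \ge \eps_{k_i}\}$.

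Two observations close the argument. First, of the $N(\eps_{k_i}) \ge c/\eps_{k_i}$ indices $m$ with $\nu_m \ge \eps_{k_i}$, at most $R_i + 1 \le c/(2\eps_{k_i})$ lie in $[0, R_i]$, so $|A_i| \ge c/(2\eps_{k_i})$; in particular each $A_i$ is non-empty and finite. Second, for every $m \in A_i$, each $m' \in M$ with $m' \le m$ must lie in some $A_j$ with $j \le i$ (since blocks $A_j$ with $j > i$ consist of indices $> \max A_i \ge m$), and satisfies $\nu_{m'} \ge \eps_{k_i}$: for $j < i$ because $\nu_{m'} \ge a_i \ge 2\eps_{k_i}$, and for $j = i$ by the definition of $A_i$. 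Hence each block contributes at least $|A_i|\cdot\eps_{k_i} \ge c/2$ to the sum in \eqref{mini}, and summation over $i$ yields the required divergence.

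The only delicate point is the two-sided smallness condition on $\eps_{k_i}$: small enough that the already-used indices $\{0,\ldots,R_i\}$ occupy at most half of the level set $\{m:\nu_m\ge\eps_{k_i}\}$ (so the next block stays large), and at the same time below $a_i/2$ (so the running minimum over $A_i$ is not spoiled by a smaller value inherited from an earlier block). Both requirements can always be met because $\eps_k \downarrow 0$, so the induction never terminates and produces infinitely many blocks, each contributing at least $c/2$ to the sum.
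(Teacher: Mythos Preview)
Your proof is correct. Both your argument and the paper's build $M$ as a disjoint union of finite blocks, each positioned to the right of the previous ones and each contributing a fixed positive amount to the sum; the difference lies in how the blocks are chosen. The paper takes $M_k=\{\ph^{-1}(a): m_k<a\le n_k\}$, i.e.\ $\ph$-preimages of an interval of integers, and arranges that the $\ph$-values increase from block to block, so that the running minimum in block $M_k$ is exactly $\mu_{n_k}$; the size condition $(n_k-m_k)\mu_{n_k}>\eps/2$ then comes directly from the hypothesis $\limsup n\mu_n>0$. You instead reformulate the hypothesis via the level sets $N(\eps)=|\{n:\mu_n\ge\eps\}|$ and take $A_i=\{m>R_i:\nu_m\ge\eps_{k_i}\}$, controlling the running minimum not by ordering the $\ph$-values but by the threshold condition $\eps_{k_i}\le a_i/2$, which forces earlier blocks to have $\nu$-values at least twice the current threshold. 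Your route makes the index ordering of the blocks immediate (it is built into the definition of $A_i$), whereas the paper has to verify it separately; on the other hand the paper avoids the level-set bookkeeping. Both approaches yield the same contribution-per-block bound of a fixed constant, and neither is materially simpler than the other.
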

\begin{rem}{\rm 
In fact, (\ref{posi}) and (\ref{mini}) are equivalent.}
\end{rem}
\begin{proof}[Proof of Lemma \ref{impl}] 
Let $\eps:=\limsup_{n\to\infty}\ n\mu_{n}>0$ and let $\ph$ be a permutation of $\N_0$.
We inductively construct a sequence $(M_k)_{k\ge 0}$ of  finite  sets
$M_k\subset \N_0$ such that for all $0\le i<j$,
\begin{eqnarray}\label{van}
\forall m\in M_i\ \forall n\in M_j&:&  m<n,\\
&& \ph(m)<\ph(n)\quad \mbox{and}\label{van2}
\end{eqnarray}
\begin{equation}
\#M_j\min_{m\in M_j} \mu_{\ph(m)}\ge \eps/2.\label{van3}
\end{equation}
The induction starts with  $M_0:=\{0\}$. Let $k\ge 1$ and assume that we have already defined  finite  sets $M_0,\ldots,M_{k-1}$ which fulfill conditions (\ref{van})--(\ref{van3}) for all $0\le i<j<k$. (These conditions are void for $k=1$.)
We set $m_k:=\max\{\ph(a)\mid 0\le a\le\max M_{k-1}\}$ and choose $n_k>m_k$ large enough such that
$(n_k-m_k)\mu_{n_k}>\eps/2$. Then $M_k:=\{\ph^{-1}(a)\mid m_k<a\le n_k\}$ is finite. Moreover, (\ref{van})--(\ref{van3}) hold for all $0\le i<j\le k$ as well.
Indeed, by induction hypothesis we only need to consider the case $j=k$. Property (\ref{van3}) follows from the definition of $M_k$ and the fact that $\mu_a$ decreases in $a$. For the proof of (\ref{van}) and (\ref{van2}) let  $m\in M_i$ and $n\in M_k$. The definition of $M_k$ implies $m_k<\ph(n)$. Therefore, by 
definition of $m_k$,  $\ph(a)<\ph(n)$ for all $0\le a\le \max M_{k-1}$.
In particular,  $a\ne n$  for all $0\le a\le \max M_{k-1}$ and therefore $a<n$ for all $0\le a\le \max M_{k-1}$.
 Due to (\ref{van}) this applies to $a=m$ and yields $\ph(m)<\ph(n)$ and $m< n$.

Having constructed the sequence $(M_k)_{k\ge 0}$
we set $M:=\bigcup_{k\ge 1}M_k$. By disjointness, see (\ref{van}), 
\begin{eqnarray*}
\lefteqn{
\sum_{n\in M}
\min_{m\in M:\ m\le n} \mu_{\ph(m)}
= \sum_{j\ge 1}\sum_{n\in M_j}
\min_{m\in M:\ m\le n} \mu_{\ph(m)}}\\
&\stackrel{(\ref{van})}{=}&\sum_{j\ge 1}\sum_{n\in M_j}
\min_{m\in M_1\cup\ldots \cup M_j:\ m\le n} \mu_{\ph(m)}
\ \stackrel{(\ref{van2})}{=}\ \sum_{j\ge 1}\sum_{n\in M_j}
\min_{m\in M_j:\ m\le n} \mu_{\ph(m)}\\
&\ge& \sum_{j\ge 1}\sum_{n\in M_j}
\min_{m\in M_j} \mu_{\ph(m)}
\ \stackrel{(\ref{van3})}{\ge}\ \sum_{j\ge 1} \eps/2\ =\ \infty.
\end{eqnarray*}
\end{proof}

For the proof of Theorem \ref{bv} we first show in Lemma \ref{BV2} that whenever there is  a function of bounded variation in $\I$ then there is also another such  function which is only ``jumping between nearest neighbors".
More precisely, we recursively define the random function
$g:\bigcup_{n\in\N_0}\{+,-\}^n\to\R$ such that $g(s_1,\ldots,s_n)\in\Y_n$ for $n\ge 1$
by setting $g(\la):=0$, where $\la$ is the empty sequence, i.e.\ the only element of $\{+,-\}^0$, and 
\begin{equation}\label{alb}
g(st):=g(s)+\left(tD_{|st|}^t(g(s))\right)\qquad\mbox{for}\ s\in \bigcup_{n\in\N_0}\{+,-\}^n, t\in\{+,-\},
\end{equation}
where $st$ is the concatenation of $s$ and $t$ and $|\cdot|$ denotes the length of a sequence. For any (finite or infinite) sequence $s=(s_i)_{1\le i<N+1}$ of length $N\le \infty$ we let
\[g^{s}:=(g(s_1,\ldots,s_i))_{0\le i<N+1}.\]
Moreover, for any finite of infinite sequence $h=(h_0,h_1,\ldots)$ of real numbers we define the total variation of $h$  
as $V(h):=\sum_{i}|h_{i}-h_{i-1}|.$
\begin{lemma}\label{BV2} Let $\X=\N$. Then 
$BV\subseteq BV_2:=\left\{\exists s\in\{+,-\}^{\N}: V(g^s)<\infty\right\}.$
\end{lemma}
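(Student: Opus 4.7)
My plan is to produce, on the event $BV$, an explicit (random) sign sequence $s\in\{+,-\}^{\N}$ by making the path $g^s$ ``shadow'' any given BV interpolant $f\in\I$. Write $g_n:=g(s_1,\ldots,s_n)$ (with $g_0=0$) and $e_n:=g_n-f(n)$. I define $s$ greedily: set $s_n:=+$ if $g_{n-1}\le f(n)$, and $s_n:=-$ otherwise. The key observation is that $f(n)\in\Y_n$ lies weakly in the direction $s_n$ from $g_{n-1}$, so the nearest point of $\Y_n$ on that side---which by definition of $g$ is $g_n$---can only be closer to $g_{n-1}$ than $f(n)$ itself. Hence $g_n$ is sandwiched between $g_{n-1}$ and $f(n)$, yielding the identity
\[
|g_n-g_{n-1}|+|g_n-f(n)|=|g_{n-1}-f(n)|.
\]

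Rearranging this and applying the triangle inequality $|g_{n-1}-f(n)|\le|e_{n-1}|+|f(n)-f(n-1)|$ gives the telescoping bound
\[
|g_n-g_{n-1}|\le|e_{n-1}|-|e_n|+|f(n)-f(n-1)|.
\]
Summing over $1\le n\le N$, the $|e_n|$ contributions telescope to $|e_0|-|e_N|\le|f(0)|$, so
\[
\sum_{n=1}^N|g_n-g_{n-1}|\le|f(0)|+\sum_{n=1}^N|f(n)-f(n-1)|,
\]
and the right-hand side is bounded uniformly in $N$ by $|f(0)|$ plus the total variation of $f$ on $\R$, which is finite by the choice of $f$. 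Letting $N\to\infty$ yields $V(g^s)<\infty$, so $s$ witnesses $BV_2$.

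The argument is entirely deterministic in $(\Y_n)_{n\ge 1}$ and $f$; no independence or stationarity assumption is used, which is consistent with the lemma's sole hypothesis being $\X=\N$. The only points needing slight care are tie-breaking when $g_{n-1}=f(n)$ (harmless: then $D_n^\pm(g_{n-1})=0$, and $g_n=g_{n-1}$ for either sign) and the initial discrepancy between $g_0=0$ and $f(0)$, which produces the innocuous $|f(0)|$ term. I do not foresee any genuine obstacle beyond this bookkeeping; the heart of the argument is the sandwich identity above, which makes the telescoping immediate.
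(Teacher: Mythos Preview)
Your proof is correct and follows essentially the same approach as the paper: both choose $s_n$ greedily according to the sign of $f(n)-g_{n-1}$, use the resulting sandwich $g_{n-1}\le g_n\le f(n)$ (or its reflection) to obtain the identity $|g_n-g_{n-1}|+|g_n-f(n)|=|g_{n-1}-f(n)|$, and then telescope against the triangle inequality. The only cosmetic difference is the bookkeeping of the telescoping: the paper replaces $f_i$ by $g_i$ one coordinate at a time in $V(g_0,\ldots,g_{i-1},f_i,\ldots,f_n)$ to show each increment is non-negative, whereas you telescope $|e_{n-1}|-|e_n|$ directly; the resulting bounds differ only by whether the leading constant is $|f(1)|$ (paper, which sets $f_0:=0$) or $|f(0)|$ (your version).
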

\begin{proof} 
Let $f\in\I$ be of bounded variation. Set $f_0:=0$ and $f_i:=f(i)$ for $i\in\N$.
We define inductively for $i\in \N$,
\begin{equation}\label{ss}
s_i:={\rm sign}\left(f_i-g(s_1,\ldots,s_{i-1})\right)\in\{+,-\},\quad\mbox{where ${\rm sign\, 0}:=+$,}
\end{equation}
and set $s:=(s_i)_{i\in\N}$. Thus $(g_i)_{i\ge 0}:=g^s$ is the sequence which starts at 0 and tries to trace $(f_i)_{i\ge 0}$ but is restricted to making only the smallest possible jumps up or down.
To prove $V(g^s)<\infty$ we consider for all $n\in\N$ the telescopic sum
\begin{eqnarray}\nonumber
\lefteqn{V(f_0,\ldots,f_n)-V(g_0,\ldots,g_n)\ =\ V(g_0,f_1,\ldots,f_n)-V(g_0,\ldots,g_n)}\\
&=&\sum_{i=1}^nV(g_0,\ldots,g_{i-1},f_i,\ldots,f_n)-V(g_0,\ldots,g_{i},f_{i+1},\ldots,f_n).\label{in}
\end{eqnarray}
\begin{figure}[t]
 \psfrag{a}{$i-1$}
 \psfrag{b}{$i$}
\psfrag{c}{$i+1$}
\psfrag{f}{$f_i$}
\psfrag{g}{$g_i$}
\epsfig{figure=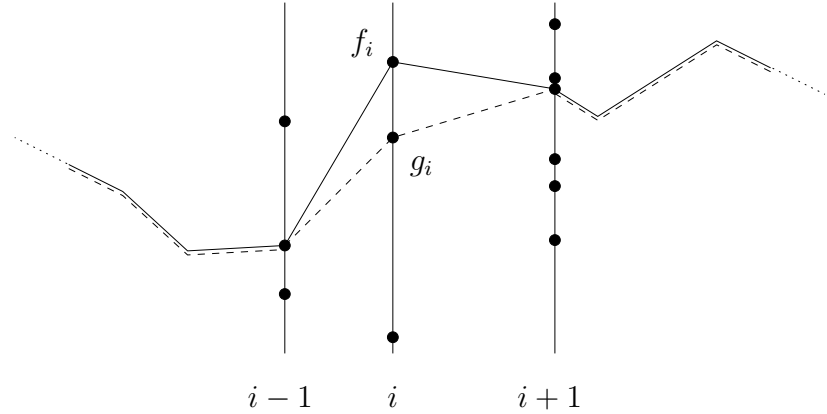,height=155pt}\hspace*{5mm}
\caption{\footnotesize The dashed graph has a smaller total variation than the solid graph.}\label{des}
\end{figure}
We shall show that all the summands in (\ref{in}) are non-negative, see Figure \ref{des}. Since the  sequences $(g_0,\ldots,g_{i-1},f_i,\ldots,f_n)$ and $(g_0,\ldots,g_{i},f_{i+1},\ldots,f_n)$  differ only in the $i$-th term the $i$-th summand in (\ref{in}) is equal to
\begin{eqnarray}
\lefteqn{|f_i-g_{i-1}|+|f_{i+1}-f_i|-|g_i-g_{i-1}|-|f_{i+1}-g_i|}\nonumber\\
&\stackrel{(\ref{ss}),(\ref{alb})}{=}&s_i(f_i-g_{i-1})+|f_{i+1}-f_i|-D_i^{s_i}(g_{i-1})-|f_{i+1}-g_{i}|\nonumber\\
&=&s_i\left(f_i-(g_{i-1}+(s_iD_i^{s_i}(g_{i-1})))\right)+|f_{i+1}-f_i|-|f_{i+1}-g_{i}|\nonumber\\
&\stackrel{(\ref{alb})}{=}&s_i\left(f_i-g_i\right)+|f_{i+1}-f_i|-|f_{i+1}-g_{i}|\nonumber\\
&=&\left|f_i-g_{i}\right|+|f_{i+1}-f_i|-|f_{i+1}-g_i|,\label{last}
\end{eqnarray}
where we used in (\ref{last}) that by definition $f_i,g_i\in\Y_i$ with $f_i\ge g_i\ge g_{i-1}$ if $s_i=+$ and $f_i\le g_i\le g_{i-1}$ if $s_i=-$.
 Due to the triangle inequality the expression in
(\ref{last}) is non-negative. Hence, by (\ref{in}),  $V(g_0,\ldots,g_n)\le V(f_0,\ldots,f_n)$
and therefore
\[V(g^s)=\sup_{n\in\N}V(g_0,\ldots,g_n)\le \sup_{n\in\N}V(f_0,\ldots,f_n)=V\left((f_n)_{n\ge 0}\right),\]
which is less than or equal to the total variation of $f$, which is finite.
\end{proof}

\begin{proof}[Proof of Theorem \ref{bv}.]
First consider  case (\ref{a}). By Lemma \ref{BV2} is suffices to show $P[BV_2]=0$. 
The proof goes along  the same lines as part of the proof of \cite[Theorem 3]{PemPer94}, which gives a criterion for explosion of first-passage percolation on spherically symmetric trees.
By definition of $BV_2$,
\begin{eqnarray}\nonumber
P[BV_2]&=&P\left[\inf_{s\in\{+,-\}^\N}V(g^s)<\infty\right]=
P\left[\inf_{s\in\{+,-\}^\N}\sup_{n\in \N}V(g^{(s_1,\ldots,s_n)})<\infty\right]\\
&\le&P\left[\sup_{n\in \N}\inf_{s\in\{+,-\}^\N}V(g^{(s_1,\ldots,s_n)})<\infty\right]=P\left[\lim_{n\to\infty} \min_{s\in\{+,-\}^n}V(g^s)<\infty\right].\label{epa}
\end{eqnarray}
For all $n\in \N$ and $s\in\{+,-\}^n$,
\begin{equation}\label{na}
V(g^s)=\sum_{i=1}^n|g(s_1,\ldots,s_{i})-g(s_1,\ldots,s_{i-1})|\stackrel{(\ref{alb})}{=}\sum_{i=1}^n D_i^{s_i}(g(s_1,\ldots,s_{i-1})).
\end{equation}
This is the sum of independent exponentially distributed random variables with respective parameter $\la_i$, see Example \ref{paul}.
To apply standard large deviation estimates we weight the summands to make them i.i.d.\ and consider first the sum
\begin{equation}\label{arg}
V^*(g^s):=\sum_{i=1}^n \la_i D_i^{s_i}(g(s_1,\ldots,s_{i-1}))
\end{equation}
of $n$ independent random variables which are all exponentially distributed with parameter 1. We choose $\eps>0$ such that $2 e^{2\eps}<3$, denote by $+^n$ the only element of $\{+\}^n$ and get  
\[P\left[\min_{s\in\{+,-\}^n}V^*(g^s)<\eps n\right]\le 2^nP\left[V^*(+^n)<\eps n\right]\le
2^ne^{2\eps n}E\left[e^{-2V^*(+^n)}\right]=\left(\frac{2e^{2\eps}}{3}\right)^n,\]
which is summable in $n$. Therefore, by the Borel Cantelli lemma, there is  a.s.\ some random $N$ such that 
\begin{equation}\label{une}
M_n^*:=\min_{s\in\{+,-\}^n}V^*(g^s)\ge \eps n\qquad\mbox{for all $n\ge N$.}
\end{equation}
Recalling (\ref{na}) we have for all $n\in \N$ and $s\in\{+,-\}^n$,
\begin{eqnarray}
V(g^s)&=&\sum_{i=1}^n\la_i^{-1}\left(\sum_{j=1}^i\la_j D_j^{s_j}(g(s_1,\ldots,s_{j-1}))-\sum_{j=1}^{i-1}\la_j D_j^{s_j}(g(s_1,\ldots,s_{j-1}))\right)\nonumber \\
&\stackrel{(\ref{arg})}{=}&\sum_{i=1}^n\la_i^{-1}\left(V^*(g^{(s_1,\ldots,s_i)})-V^*(g^{(s_1,\ldots,s_{i-1})})\right) \nonumber\\
&=&\sum_{i=1}^n\la_i^{-1}V^*(g^{(s_1,\ldots,s_i)})-\sum_{i=1}^{n-1}\la_{i+1}^{-1}V^*(g^{(s_1,\ldots,s_i)})\nonumber \\
&=& \la_n^{-1}V^*(g^s)+\sum_{i=1}^{n-1}(\la_i^{-1}-\la_{i+1}^{-1})V^*(g^{(s_1,\ldots,s_i)}).\label{rn}
\end{eqnarray}
Due to assumption (\ref{a}), $\la_i^{-1}-\la_{i+1}^{-1}\ge 0$. Therefore, the right hand side in (\ref{rn}) is
\begin{eqnarray}\nonumber
&\ge&\la_n^{-1}V^*(g^s)+\sum_{i=1}^{n-1}(\la_i^{-1}-\la_{i+1}^{-1})M^*_i\\ \nonumber
&=&\la_n^{-1}V^*(g^s)+\sum_{i=1}^{n-1}(\la_i^{-1}-\la_{i+1}^{-1})\eps i\\ \nonumber
&&+\ \sum_{i=1}^{(N\wedge n)-1}(\la_i^{-1}-\la_{i+1}^{-1})(M^*_i-\eps i)+ \sum_{i=N\wedge n}^{n-1}(\la_i^{-1}-\la_{i+1}^{-1})(M^*_i-\eps i)\\ \label{no}
&\stackrel{(\ref{une})}{\ge}&\la_n^{-1}V^*(g^s)+\eps\sum_{i=1}^{n-1}(\la_i^{-1}-\la_{i+1}^{-1}) i-c_1(N)+0,\qquad\mbox{where}\\
c_1(N)&:=&\eps \sum_{i=1}^{N-1}(\la_i^{-1}-\la_{i+1}^{-1})i.\nonumber
\end{eqnarray}
The second summand in (\ref{no}) is equal to
\[\eps\left(\sum_{i=1}^{n-1}\frac{i}{\la_i}-\sum_{i=2}^{n}\frac{i-1}{\la_i}\right)=\eps\left(\frac{1}{\la_1}-\frac{n-1}{\la_n}+\sum_{i=2}^{n-1}\frac{1}{\la_i}\right)=\eps\left(-\frac{n-1}{\la_n}+\sum_{i=1}^{n-1}\frac{1}{\la_i}\right).\]
Therefore, the right hand side of (\ref{no}) is equal to
\begin{equation}\label{su}
\eps\sum_{i=1}^{n-1}\frac{1}{\la_i}-c_1(N)+\frac{V^*(g^s)-\eps(n-1)}{\la_n}\ge
\eps\sum_{i=1}^{n-1}\frac{1}{\la_i}-c_1(N)-c_2(N)\\
\end{equation}
since
\[ \frac{V^*(g^s)-\eps(n-1)}{\la_n}\ge \frac{M^*_n-\eps n}{\la_n}\stackrel{(\ref{une})}{\ge} \frac{-\eps N}{\la_1}=:c_2(N).\]
The right hand side of (\ref{su}) does not depend on the particular choice of   $s\in\{+,-\}^n$ and tends a.s.\ to $\infty$ as
$n\to\infty$ due to (\ref{a}). Consequently, the right most side of (\ref{epa}) is 0, which completes the proof in case (\ref{a}).

In case (\ref{b}) there is some finite constant $c$ such that $\X':=\{x\in\X:\ \la_x<c\}$ is infinite. Since $\X'\subseteq\X$ and $(\la_x)_{x\in\X'}\le (c)_{x\in\X'}$ the claim follows from (\ref{a}) and monotonicity (Remark \ref{mm}).

Case (\ref{c}) is treated similarly. We choose a permutation $\ph$ of $\N$ such that $\la_{(n)}=\la_{\ph^{-1}(n)}$ for all $n$.
Applying Lemma \ref{impl} to $\mu_n:=1/\la_{(n)}$ gives a set $\X':=M\subseteq \N_0$ such that  the increasing sequence $(\la'_x)_{x\in\X'}$ with $\la'_x:=\max_{n\in\X':n\le x}\la_n\ge\la_x$ satisfies  $\sum_{x\in\X'}1/\la'_x=\infty$. 
The statement now follows from case (\ref{a}) by monotonicity (Remark \ref{mm}).
\end{proof}
\begin{theorem} {\rm \bf (Lipschitz functions)} \label{li}
Denote by $p_c$ the threshold for oriented site percolation on the square lattice $\Z^2$.
Then there is 
\begin{equation}\label{lr}
\la_c\in-\frac{[1,3]}{2}\ln (1-p_c) ,
\end{equation}
with the following property:
If (PPP) holds, $\X=\Z, K>0$ and $\la>0$ is such that $\la_x=\la$ for all $x\in\X$ then
\begin{equation}\label{crit}
P[L_K]=0\quad\mbox{if $\la<\la_c/K$ and}\quad P[L_K]=1\quad\mbox{if $\la>\la_c/K$}.
\end{equation}
\end{theorem}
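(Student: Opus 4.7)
\textit{Proof plan.} The first move is to reduce to $K=1$ via the vertical scaling $(x,y)\mapsto(x,y/K)$, under which a Lipschitz-$K$ function becomes Lipschitz-$1$ and the PPP $\Y_x$ of intensity $\la$ becomes a PPP of intensity $K\la$. Hence $P[L_K]$ at intensity $\la$ equals $P[L_1]$ at intensity $K\la$, so (\ref{crit}) reduces to producing some $\la_c\in-[1,3]/2\cdot\ln(1-p_c)$ with $P[L_1]=1$ above $\la_c$ and $P[L_1]=0$ below. Since $(\Y_n)_{n\in\Z}$ is i.i.d.\ under (PPP) with $\X=\Z$ and constant intensity, $L_1$ is a tail event; Kolmogorov's $0$--$1$ law gives $P[L_1]\in\{0,1\}$, and monotonicity in $\la$ (Remark \ref{mm}, via stochastic domination of PPPs by superposition) produces the threshold.

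For the upper bound $\la_c\le-\tfrac{3}{2}\ln(1-p_c)$, the plan is to compare to supercritical oriented site percolation. Partition $\R$ into intervals $I_m:=[m\delta,(m+1)\delta)$ of length $\delta>0$, declare $(n,m)\in\Z^2$ open if $\Y_n\cap I_m\ne\emptyset$ (independent events, each with probability $1-e^{-\la\delta}$), and observe that a bi-infinite oriented open path $(n,m_n)_{n\in\Z}$ with $|m_{n+1}-m_n|\le 1$ lets any choice $y_n\in\Y_n\cap I_{m_n}$ be linearly interpolated into a function with $|y_{n+1}-y_n|\le 2\delta$, which is Lipschitz-$1$ once $\delta\le 1/2$. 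Bi-infinite open oriented paths exist a.s.\ in the supercritical regime: by the FKG inequality, the set of sites belonging to both a forward and a backward infinite open cluster has positive probability (at least the square of the forward percolation probability), and by horizontal shift-ergodicity such sites a.s.\ occur in every column band; concatenating their forward and backward clusters produces the required path. Pulling the factor down from the naive $2$ (obtained from $\delta=1/2$ with step set $\{0,1\}$) to $3/2$ requires a block/renormalization argument that effectively exploits the richer step set $\{-1,0,1\}$ in the oriented comparison.

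For the lower bound $\la_c\ge-\tfrac{1}{2}\ln(1-p_c)$, any $f\in L_1$ produces $y_n:=f(n)\in\Y_n$ with $|y_{n+1}-y_n|\le 1$, so with $m_n:=\lfloor y_n/\delta\rfloor$ the sequence $(n,m_n)_{n\in\Z}$ is a bi-infinite open oriented path in the $3$-neighbor oriented site percolation model at site probability $1-e^{-\la\delta}$. Bounding the critical probability of this $3$-neighbor model by $p_c$---either by comparing it to standard $2$-neighbor oriented percolation via a block argument, or via a dual/contour argument based on ``impermeable'' strips of consecutive columns where $\Y_n\cap[a,a+2]=\emptyset$ holds with probability $e^{-2\la}$---and optimizing over $\delta$ yields the stated bound.

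The main obstacle is extracting the sharp constants $1$ and $3$: naive discretizations only give weaker estimates such as $\la_c\le-2\ln(1-p_c)$ (from $\delta=1/2$ and step set $\{0,1\}$), and closing the gap requires a block/renormalization comparison on the supercritical side together with a careful dual/contour or combinatorial comparison on the subcritical side. A secondary subtlety is that standard oriented percolation deals with forward clusters from a single origin, whereas the interpolant requires a bi-infinite path through every integer column; this is handled via horizontal stationarity and ergodicity rather than by oriented-percolation combinatorics.
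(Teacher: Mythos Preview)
Your overall framework---scaling to $K=1$, the $0$--$1$ law, monotonicity in $\la$ giving a threshold, and comparison with oriented site percolation via a discretization of the vertical axis---matches the paper exactly. The gap is precisely where you locate it yourself: you do not actually reach the constants $1$ and $3$, and the ``block/renormalization'' steps you invoke to close the gap are not carried out (and would not obviously yield these particular numbers). Your lower-bound sketch has an additional difficulty: a Lipschitz-$1$ function under a uniform grid of mesh $\delta$ produces a path in the \emph{three}-neighbor oriented model, whose critical probability $p_c^{(3)}$ is \emph{at most} $p_c$, so the inequality runs the wrong way for deducing a bound in terms of $p_c$.

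The paper avoids all of this with a single device: instead of a uniform grid it uses a \emph{staggered} partition
\[
S(x,i)=\{x\}\times\big[4i+(-1)^x,\;4(i+1)+(-1)^x\big),\qquad x,i\in\Z,
\]
into length-$4$ segments shifted by $\pm 1$ according to the parity of $x$, with edges $(S(x,i),S(x+1,i))$ and $(S(x,i),S(x+1,i+(-1)^x))$. This graph is isomorphic to the standard \emph{two}-neighbor oriented square lattice, and a site is open with probability $1-e^{-4\la}$. The stagger is the whole point: it makes the union of the two out-neighbors of $S(x,i)$ exactly the interval $[4i-1,4i+7)$ at column $x+1$, so that (i) any open bi-infinite directed path can be traced by some $f\in\I$ with Lipschitz constant $6$, giving $\la_c=6\,\la_c(6)\le -\tfrac{3}{2}\ln(1-p_c)$, and (ii) any $f\in\I$ with Lipschitz constant $2$ moves from $S(x,i)$ into one of its two out-neighbors, so $L_2\subseteq\mathcal O$ and $\la_c=2\,\la_c(2)\ge -\tfrac{1}{2}\ln(1-p_c)$. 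No renormalization or comparison of different oriented models is needed; both bounds drop out of the same partition. Replacing your uniform grid by this staggered one would complete your argument.
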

\begin{proof} By independence of $(\Y_x)_{x\in\X}$ and ergodicity of each $\Y_x$, $P[L_K]\in\{0,1\}$, see Remark \ref{0E}. Since $P[L_K]$ is increasing in $\la$ for every $K$, see Remark \ref{mm}, there exists for all $K$ some $\la_c(K)\in[0,\infty]$ with property (\ref{crit}). Scaling $V$ by $(x,y)\mapsto
(x,y/K)$ reduces the problem to considering Poisson point processes of intensity $\la K$ and Lipschitz functions with Lipschitz constant 1. This implies that $\la_c(K)$ in fact does not depend on $K$.

For the proof of (\ref{lr}) we partition $\Z\times\R$ into 
the line segments $S(x,i):=\{x\}\times [4i+(-1)^x,4(i+1)+(-1)^x)$ of length 4,  where $x,i\in\Z$. 
Consider the graph $G$ with vertices $S(x,i)$, and oriented edges 
$(S(x,i),S(x+1,i))$ and $(S(x,i),S(x+1,i+(-1)^x))$, where $x,i\in\Z$.
This graph is isomorphic to the oriented square lattice, see Figure \ref{op} (a). 
\begin{figure}[t]
\epsfig{figure=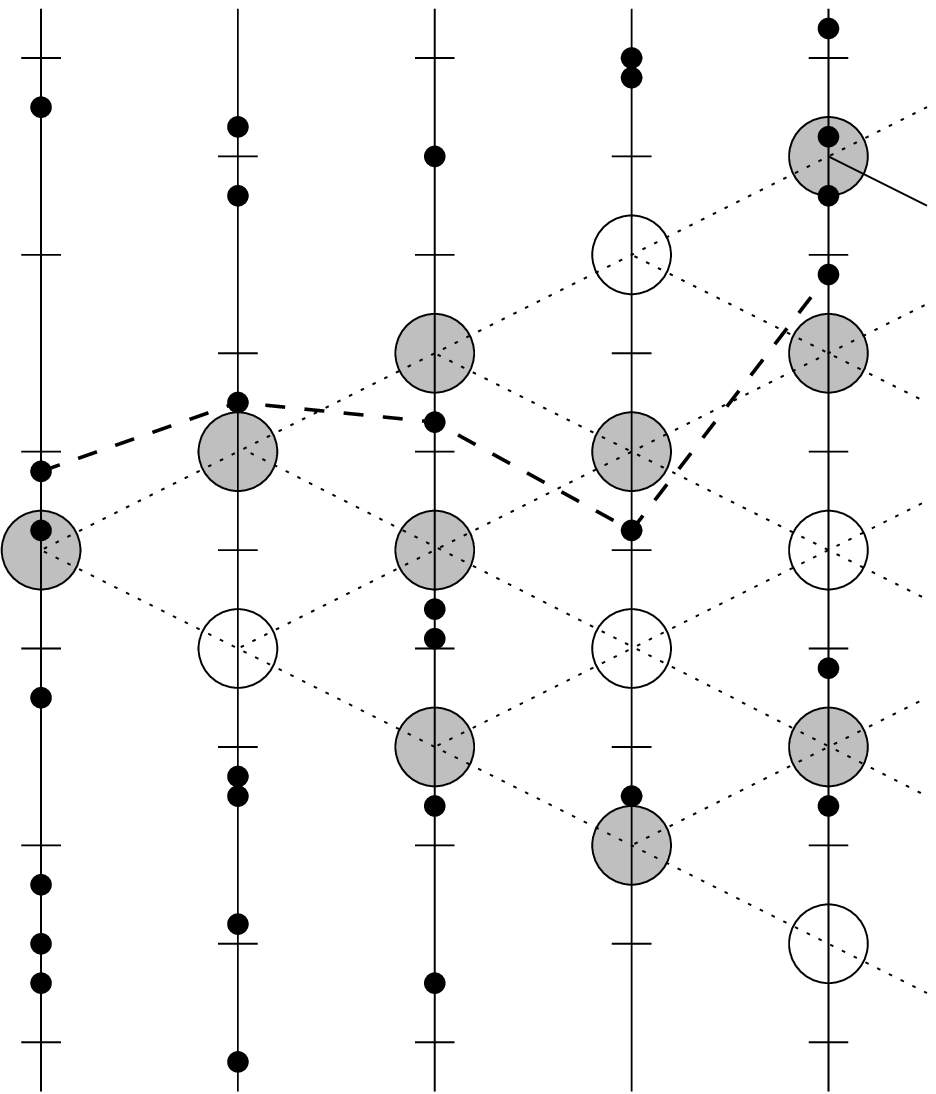,height=210pt}
\hspace*{5mm}\epsfig{figure=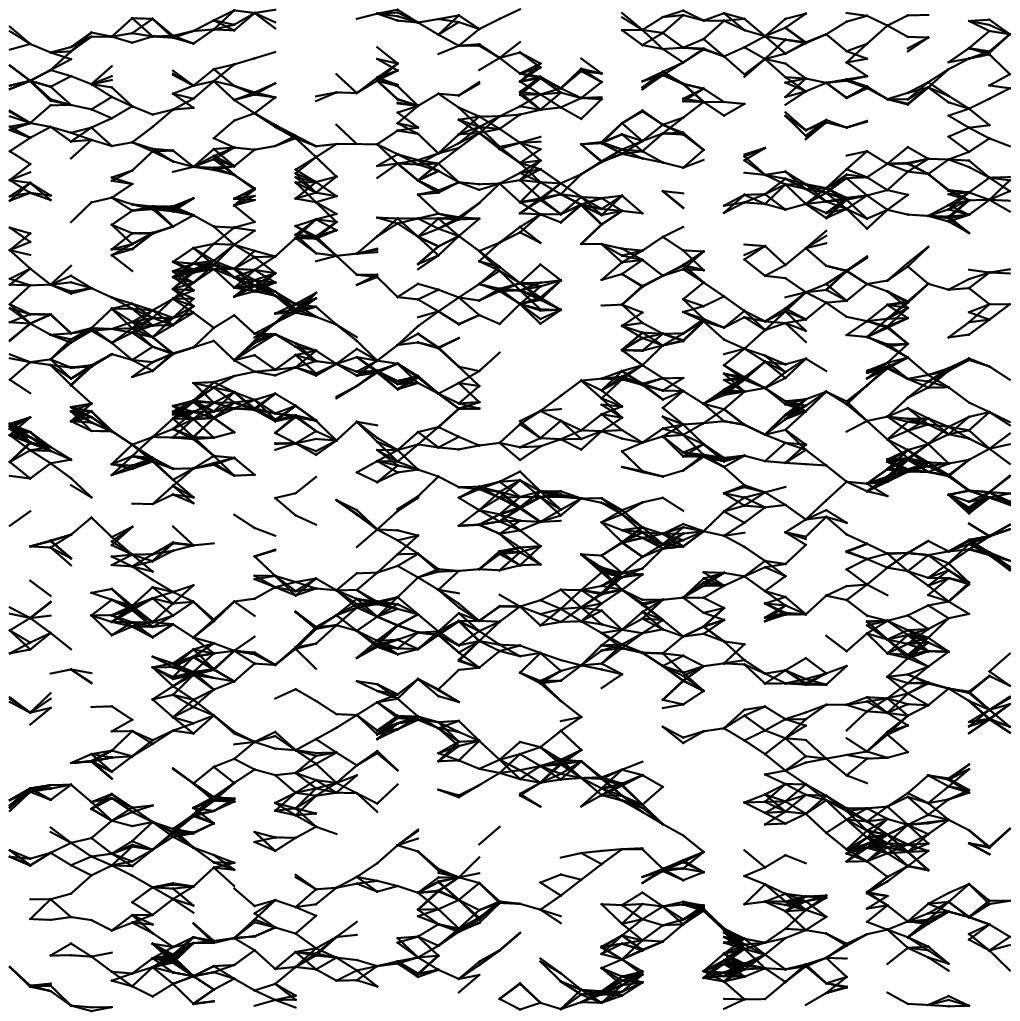,height=215pt}
\caption{\footnotesize Here we assume (PPP) and $\X=\Z$. (a) Vertices of a quadrant of $G$ are indicated as discs, the edges connecting them are dotted. Discs are shaded, i.e.\ open, if and only if there is a point of the Poisson point process in the corresponding interval. The graph of some Lipschitz function accompanying an open directed path is dashed. (b)  Here $\la_x=1$ for all $x\in\X$. The figure shows those straight line segments whose end points $(x,y_x)$ and $(x+1,y_{x+1})$ satisfy $x\in\{1,2,\ldots,50\};\ y_x\in\Y_x\cap[0,50],\ y_{x+1}\in\Y_{x+1}\cap[0,50]$ and $|y_{x+1}-y_x|\le K=1$. Do these line segments contain the graph of a function defined on [1,50]? }\label{op}
\end{figure}
We declare the vertex $S(x,i)$
to be open  if it intersects $\{x\}\times\Y_x$ and to be closed otherwise.
Denote by $\mathcal O$ the event that there is a double infinite  directed path in $G$ between open nearest neighbors.
Any such path induces some $f\in\I$ with Lipschitz constant $6$. Therefore, $\mathcal O\subseteq L_{6}$.
Since any vertex is open with probability $1-e^{-4\la}$ this yields the upper bound on $\la_c=\la_c(6)$ in (\ref{lr}). 
Conversely, any $f\in\I$ with Lipschitz constant $2$ induces an open  double infinite directed path in $G$. Hence $L_{2}\subseteq \mathcal O$. This implies the lower bound on $\la_c=\la_c(2)$ in (\ref{lr}).
\end{proof}
\begin{rem}{\rm Substituting Bishir's \cite{B63} \cite[Theorem 4.1]{PeaFl05} lower bound $p_c\ge 2/3$    and 
Liggett's  \cite{L95} upper bound $p_c\le 3/4$ 
into (\ref{lr}) yields $0.549\le \la_c\le 2.08$. The lower bound can be improved by directly applying the methods described in \cite[Section 6]{Du84}.}
\end{rem}
\begin{problem}{\rm What is the exact value of $\la_c$? Critical thresholds in (oriented) percolation are rarely explicitly known. However, due to
Monte-Carlo simulations,   see also Figure \ref{op} (b), we conjecture $\la_c=1$.}
\end{problem}
In the following simple example the corresponding $\la_c$ can be computed explicitly.
\begin{ex}{\rm Let $\X=\Z$ and $\la,K>0$.  Let $(U_n)_{n\in\Z}$ be independent and uniformly distributed on $[0,1]$ and set
$\Y_n:=(U_n+\Z_n)/\la$ for $n\in\Z$. (Note the similarity of $V$ to perforated toilet paper.) Then for any $n\in\Z$ and $z\in\R$ there is some $y\in\Y_n$ with $|z-y|\le K$ if  $2K\ge 1/\la$. If  $2K< 1/\la$ then there is at most one such $y$ and with positive probability no such $y$. Hence $P[L_K]=1$ if $\la\ge 1/(2K)$ and $P[L_K]=0$ else. Thus in this case (\ref{crit}) holds for $\la_c=1/2$.}
\end{ex}
Next we consider some of the smoothest functions, polynomials and real analytic functions.
\begin{prop}{\rm \bf (Polynomials)} \label{po} Assume (IND) and (STAT) and that all $\Y_x$, $x\in\X$, are a.s.\ countable.
Then $P[P_m]=0$ for all $m\in\N_0$.
\end{prop}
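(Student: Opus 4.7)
The plan is to exploit the fact that any polynomial of degree at most $m$ is uniquely determined by its values at $m+1$ distinct points via Lagrange interpolation. I would fix $m+2$ distinct points $x_0,\ldots,x_m,x^*\in\X$, which exist since $\X$ is countably infinite, and for each tuple $(y_0,\ldots,y_m)\in\R^{m+1}$ denote by $f_{y_0,\ldots,y_m}$ the unique polynomial of degree at most $m$ with $f_{y_0,\ldots,y_m}(x_i)=y_i$ for $i=0,\ldots,m$.

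First I would establish the following point-atomlessness: under (STAT) and a.s.\ countability of $\Y_{x^*}$, we have $P[c\in\Y_{x^*}]=0$ for every $c\in\R$. By stationarity, $p:=P[c\in\Y_{x^*}]=P[0\in\Y_{x^*}]$ is independent of $c$. Fubini's theorem applied to the jointly measurable indicator of the random closed set $\Y_{x^*}$ yields $E[\mathrm{Leb}(\Y_{x^*}\cap[0,1])]=p$, which vanishes because $\Y_{x^*}$ is a.s.\ countable and hence Lebesgue-null.

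Next I would observe that on the event $P_m$, any witnessing polynomial $f$ satisfies $f(x_i)\in\Y_{x_i}$ for $i=0,\ldots,m$ and therefore coincides with $f_{f(x_0),\ldots,f(x_m)}$; in particular $f(x^*)\in\Y_{x^*}$. Consequently,
\[
P_m\ \subseteq\ \bigcup_{(y_0,\ldots,y_m)\in\Y_{x_0}\times\cdots\times\Y_{x_m}}\bigl\{f_{y_0,\ldots,y_m}(x^*)\in\Y_{x^*}\bigr\}.
\]
Conditioning on $\F^*:=\si(\Y_{x_0},\ldots,\Y_{x_m})$, the index set above is a.s.\ countable. By (IND), $\Y_{x^*}$ is independent of $\F^*$, so each event in the union has conditional probability $P[c\in\Y_{x^*}]$ evaluated at the $\F^*$-measurable constant $c=f_{y_0,\ldots,y_m}(x^*)$, which equals $0$ by the previous step. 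Summing the conditional probabilities over the countable random index set yields $P[P_m\mid\F^*]=0$ a.s., hence $P[P_m]=0$.

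There is no serious obstacle here; the only subtlety is controlling the countable union over the random index set $\Y_{x_0}\times\cdots\times\Y_{x_m}$, and this is handled cleanly by the conditioning argument that isolates the independent ``fresh'' coordinate $\Y_{x^*}$. Note that the argument uses stationarity only to rule out point masses of $\Y_{x^*}$, so the assumption (STAT) could likely be relaxed, at the cost of requiring each $\Y_x$ to have no fixed atoms.
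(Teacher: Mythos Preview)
Your proof is correct and follows the same core strategy as the paper: fix $m+2$ distinct points of $\X$, use Lagrange interpolation at the first $m+1$ to parametrize all candidate polynomials by a countable set of tuples, and then show that the value at the $(m+2)$-th point a.s.\ misses $\Y_{x^*}$.

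The execution differs slightly. You first isolate the lemma $P[c\in\Y_{x^*}]=0$ for fixed $c$ (via stationarity and Fubini), and then condition on $\F^*=\si(\Y_{x_0},\ldots,\Y_{x_m})$ to reduce to a countable union of null events. The paper instead truncates to compact boxes, forming $G_K=\{f_y(x_{m+1}): y\in\prod_i(\Y_{x_i}\cap[-K,K])\}$, and then integrates the shift parameter $a$ over $[0,1]$ using (STAT) and Fubini to bound $P[G_K\cap\Y_{x_{m+1}}\ne\emptyset]$ by $E[\la(G_K-\Y_{x_{m+1}})]=0$. The paper's truncation to $G_K$ buys a clean random \emph{closed} (in fact compact) set, which makes the joint measurability needed for Fubini immediate; your conditioning argument over a random countable index set is morally the same but leans a bit more on completeness of $(\Om,\F,P)$ to sidestep measurability of the enumeration. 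Your closing remark that (STAT) is used only to rule out fixed atoms is a nice observation not made explicit in the paper.
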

\begin{proof} Let $m\in\N_0$ and let $x_0,\ldots,x_{m+1}\in\X$ be pairwise distinct.
For any $y=(y_0,\ldots,y_{m})\in\prod_{i=0}^{m}\Y_{x_i}$ there is exactly one polynomial $f_y$ of degree $m$ with
$f_y(x_i)=y_i$ for $i=0,\ldots,m$. For all $K\in\N$ the set $\prod_{i=0}^{m}\left(\Y_{x_i}\cap[-K,K]\right)$ is countable and compact. Hence, 
$G_K:=\{f_y(x_{m+1}):\ y\in\prod_{i=0}^{m}\left(\Y_{x_i}\cap[-K,K]\right)\}$ is a.s.\ countable and, by continuity, compact as well.
If $P_m$ occurs then $G_K\cap\Y_{x_{m+1}}\ne\emptyset$ for some $K\in\N$. By (IND) and (STAT), the closed set
$G_K\cap\Y_{x_{m+1}}$ has for all $a\in\R$ the same distribution as $G_K\cap(\Y_{x_{m+1}}+a)$. Therefore,
\begin{eqnarray*}
P[P_m]&\le& \sum_{K\in\N}P[G_K\cap\Y_{x_{m+1}}\ne \emptyset]
\ =\ \sum_{K\in\N}\int_0^1 E\left[\won_{\{G_K\cap(\Y_{x_{m+1}}+a)\ne \emptyset\}}\right]\ da.
\end{eqnarray*}
Using Fubini's theorem and denoting by $\la$ the Lebesgue measure on $\R$, we get
$P[P_m]\le\sum_K
E\left[\la\left(G_K-\Y_{x_{m+1}}\right)\right]\ =\ 0$
since $G_K-\Y_{x_{m+1}}:=\{a-b\mid a\in G_K, b\in\Y_{x_{m+1}}\}$ is a.s.\ countable.
\end{proof}
Of course, the assumption of countability in Proposition \ref{po} cannot be dropped. Here is a nontrivial example.
Let $(x_n)_{n\in\N}$ enumerate $\X$ and let $1>\ell_1\ge\ell_2\ge\ldots\ge \ell_n\to 0$ as $n\to\infty$. We consider two different families $(\Y_x)_{x\in\X}$. The first one is  of the type described in (\ref{ano}) and is given by 
\begin{equation}\label{pcb}
\Y_{x_n}=[\ell_n,1]+\Z+U_n,\quad \mbox{where $(U_n)_{n\ge 1}$ is i.i.d., $U_n\sim{\rm Unif}[0,1],$}
\end{equation}
cf.\ Figure \ref{1} (b). The second one consists of complements of Boolean models:
\begin{equation}\label{pcc}
\begin{array}{l}
\Y_{x_n}=(]0,\ell_n[+\Y'_n)^c,\quad \mbox{where $(\Y'_n)_{n\ge 1}$ are independent}\\ 
\mbox{Poisson point processes with intensity 1.}
\end{array}
\end{equation}
In case (\ref{pcb}), $P[0\in\I]=\prod_{n\ge 1}(1-\ell_n)$, while in case 
(\ref{pcc}), $P[0\in\I]=\prod_{n\ge 1}e^{-\ell_n}$. Therefore,
 in either case 
$P[0\in\I]=0$ if and only if $\sum_n\ell_n=\infty$. Obviously, $\{0\in\I\}\subseteq P_0$. However, the following theorem due to L.\ A.\ Shepp shows that these two events might differ by more than a null set. To recognize it recall (\ref{int}) and take complements. 
\begin{theorem}\label{shepp}{\bf (Constant functions, \cite{Sh72a}, \cite[(42)]{Sh72b}).}
Assume (\ref{pcb}) or (\ref{pcc}). Then
$P[P_0]=0$ if and only if
\begin{equation}\label{gad}
\sum_{n\ge 1}\frac{1}{n^2}\exp\left(\sum_{i=1}^n\ell_i\right)=\infty.
\end{equation}
\end{theorem}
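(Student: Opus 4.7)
The plan is to deduce both cases from L.\ A.\ Shepp's classical covering theorems by reformulating $P_0$ as a covering event. By (\ref{int}), $P_0=\{\bigcap_{n\ge1}\Y_{x_n}\ne\emptyset\}$, so $P[P_0]=0$ is equivalent to $P[\bigcup_{n\ge1}\Y_{x_n}^c=\R]=1$. Thus the task reduces to deciding when the random open sets $\Y_{x_n}^c$, $n\ge 1$, jointly cover $\R$ almost surely. I would first record that $P[P_0]\in\{0,1\}$: in both cases the joint distribution of $(\Y_{x_n})_{n\ge 1}$ is stationary and ergodic under the translation action of $\R$ (by the i.i.d.\ uniform shifts $U_n$ in (\ref{pcb}), and by the mixing of the Poisson processes $\Y_n'$ in (\ref{pcc})), and the event $\{\bigcup_n\Y_{x_n}^c=\R\}$ is translation-invariant.

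In case (\ref{pcb}) I would observe that, up to a countable set, $\Y_{x_n}^c=\bigcup_{k\in\Z}(U_n+k,\,U_n+k+\ell_n)$, so modulo $1$ this is a single random arc of length $\ell_n$ placed uniformly on the torus $\T=\R/\Z$, and these arcs are independent across $n$. Projecting the covering question mod $1$ identifies $\{\bigcup_n\Y_{x_n}^c=\R\}$ with $\{\bigcup_n I_n=\T\}$, where the $I_n$ are i.i.d.\ uniformly placed arcs of prescribed lengths $\ell_n$. Shepp's theorem \cite{Sh72a} on covering the circle then says that this latter event has probability $1$ iff (\ref{gad}) holds, completing the proof in this case.

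In case (\ref{pcc}) the complement $\Y_{x_n}^c=(0,\ell_n)+\Y_n'$ is a Boolean model on $\R$ with deterministic interval length $\ell_n$ and Poisson center intensity $1$, independent across $n$. Here Shepp's companion result for covering the real line \cite[(42)]{Sh72b} applies directly and yields exactly the criterion (\ref{gad}). The main obstacle of the proof sits entirely inside Shepp's theorems, whose arguments rely on delicate second-moment estimates on the tail of the uncovered set; in the present setting my own work consists only of the two reductions described and the verification of the $0$--$1$ law.
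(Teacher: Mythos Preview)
Your reduction is exactly what the paper does: Theorem \ref{shepp} is not proved in the paper but is simply attributed to Shepp's results \cite{Sh72a} and \cite[(42)]{Sh72b}, after the identification (\ref{int}) of $P_0$ with the non-emptiness of $\bigcap_x\Y_x$. So the substantive content of your proposal --- recognising that $P[P_0]=0$ is equivalent to a.s.\ covering of $\R$ (respectively of the circle) and then invoking Shepp --- is precisely the intended argument.

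There is, however, one genuine error in your write-up. You assert a $0$--$1$ law for $P[P_0]$ in both cases, claiming ergodicity of the joint family under the $\R$-action. In case (\ref{pcb}) this is false: each $\Y_{x_n}$ is $1$-periodic, so the joint process is determined by $(U_n\bmod 1)_n$, on which translations act diagonally; events depending only on differences $U_i-U_j\bmod 1$ are invariant but nontrivial, and ergodicity fails. The paper spells this out in Remark \ref{no0}: in case (\ref{pcb}) one can have $0<P[P_0]<1$ (indeed $P[P_0]=1$ iff $\sum_n\ell_n\le 1$, which is not the negation of (\ref{gad})). Fortunately this slip does not damage your argument: the equivalence $P[P_0]=0\Longleftrightarrow P[\bigcup_n\Y_{x_n}^c=\R]=1$ is just complementation and needs no ergodicity, and Shepp's circle-covering theorem \cite{Sh72a} gives exactly the dichotomy ``covered with probability $1$'' versus ``not covered with positive probability'', which is what you need. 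So simply delete the $0$--$1$ claim for case (\ref{pcb}); the rest stands.
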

\begin{rem}\label{no0}
{\rm Note that in case (\ref{pcb}) there is no zero-one law like in Remark \ref{0E}: $0<P[P_0]<1$ is possible. In fact, $P[P_0]=1$ if and only if  $\sum_n\ell_n\le 1$, which is not the opposite of (\ref{gad}).}
\end{rem}
\begin{problem}\label{speed} {\rm Let $m\in\N$. Assuming (\ref{pcb}) or (\ref{pcc}), find conditions which are necessary and sufficient for $P[P_m]=0$ (respectively, $P[P_m]=1$).

For $m=1$ and case (\ref{pcb}) (see also Figure \ref{1} (b)) this problem can be phrased in terms of random coverings of a circle in the spirit of \cite{Sh72a} and \cite{JS08} as follows:
Arcs of length $\ell_x\ (x\in\X)$ are thrown independently and uniformly on a circle of unit length
and then rotate at respective speed $x$ around the circle. Give a necessary and sufficient condition in terms of $(\ell_x)_{x\in\X}$ and $\X$ under 
which there is a.s.\ no (resp., a.s.\ at least one) point in time at which the circle is not completely covered by the arcs. In other words: Under which conditions 
is there a.s.\ no (resp., a.s.\ at least one) random, but  constant speed at which one can drive along a road with infinitely many independent traffic lights without 
ever running into a red light? 
}
\end{problem}
\begin{prop}{\rm \bf (Real analytic functions)}\label{ana}
If $\X$ is locally finite then $P[A]=1$.
\end{prop}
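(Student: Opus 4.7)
The plan is to reduce the statement to the following classical, purely deterministic fact: given any locally finite set of distinct points $\{x_n\} \subset \R$ and arbitrary prescribed real values $\{y_n\} \subset \R$, there exists a real analytic function $f: \R \to \R$ with $f(x_n) = y_n$ for all $n$. Once this is in hand, the probabilistic content of the proposition is essentially trivial.

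More precisely, first I would enumerate $\X$ as $(x_n)_n$ (the case of finite $\X$ is handled by polynomial interpolation, so assume $\X$ is countably infinite); local finiteness of $\X$ means $|x_n| \to \infty$. Since $\X$ is countable, assumption (\ref{hab}) together with a union bound shows that
\[
\Om_0 := \bigcap_{x \in \X} \{\Y_x \ne \emptyset\}
\]
has $P[\Om_0] = 1$. On $\Om_0$, pick $y_n \in \Y_{x_n}$ for each $n$ (no measurability is needed; we only need $A$ to occur $P$-a.s., and $A \in \F$ by the completeness assumption already noted in the paper). It remains to exhibit a real analytic interpolant.

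The interpolant can be constructed by the standard Weierstrass/Mittag--Leffler recipe. Since $(x_n)$ is locally finite in $\R$, Weierstrass's product theorem yields an entire function $P$ whose zero set is exactly $\{x_n\}$, each zero simple, with $P'(x_n) \ne 0$. One then defines
\[
f(z) \;=\; \sum_{n} \frac{y_n \, P(z)}{P'(x_n)(z - x_n)} \, e^{g_n(z)},
\]
where the entire functions $g_n$ (polynomials, or truncated Taylor series of suitable holomorphic branches of a logarithm) are chosen so that the $n$-th summand is dominated on each compact set of $\C$ by a term of a convergent series, while $e^{g_n(x_n)} = 1$. The factor $P(z)/[P'(x_n)(z-x_n)]$ vanishes at every $x_m$ with $m \ne n$ and equals $1$ at $z = x_n$, so $f(x_n) = y_n$. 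Since the series converges uniformly on compact subsets of $\C$, $f$ is entire, and because all $x_n$, $y_n$, and (after a standard symmetrization of the convergence factors) the coefficients can be taken real, $f$ restricts to a real analytic function on $\R$. This lies in $\I$, giving $\om \in A$ for every $\om \in \Om_0$, so $P[A] = 1$.

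The only mildly technical point is the choice of convergence factors $e^{g_n}$, which is a standard textbook construction (the proof of the Mittag--Leffler interpolation theorem for entire functions); no new estimates are required. I do not anticipate a genuine obstacle: neither (IND) nor any stationarity or distributional hypothesis enters, and the single probabilistic input is (\ref{hab}) used to ensure $\Y_x \ne \emptyset$ simultaneously for all $x \in \X$ on a set of full measure.
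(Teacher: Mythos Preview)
Your proposal is correct and follows the same approach as the paper: both reduce the claim to the deterministic fact that any real sequence of values on a locally finite real set can be interpolated by an entire function, invoking only (\ref{hab}) on the probabilistic side. The paper simply cites this interpolation theorem (Rudin, \textit{Real and Complex Analysis}, Theorem 15.13) rather than sketching its proof, and handles real-valuedness more cleanly by taking the real part $\sum_n \Re(a_n)z^n$ of the resulting power series instead of arranging real coefficients via symmetrized convergence factors.
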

\begin{proof} Choose $y_x\in\Y_x$ for all $x\in\X$.  
By \cite[Theorem 15.13]{Ru87} there is an entire function $g:\C\to\C, g(z)=\sum_{n\ge 0}a_nz^n,$ such that $g(x)=y_x$ for all $x\in\X$. Its real part $\Re(g(z))=\sum_{n\ge 0}\Re(a_n)z^n$ restricted to $\R$ is real analytic and  takes values $y_x$ at $x\in\X$ as well.
\end{proof}
We conclude by suggesting some further directions of research.
\begin{problem}{\rm Theorem \ref{li} and  Proposition \ref{ana} deal only with locally finite $\X$. What can be said about $P[L_K]$ and $P[A]$ for more general $\X$?
It is easy to see that even in case (PPP) with constant intensities $\la_x=\la$
any general criterion for, say, $P[L_K]=0$  
would need to depend not only on $\la$ but also on $\X$ itself.
If $\X$ is for example bounded 
then for any $K>0$, $P[L_K]\le P[C]=0$ by Theorem  \ref{cont}, no matter how large $\la$ is, in contrast to (\ref{crit}).
}
\end{problem}
\begin{problem}{\rm 
One might consider other types of interpolating functions.
For example, under which conditions are there functions $f\in\I$, which are (a)  
continuous and  monotone at the same time or (b) H\"older continuous or (c) $k$-times continuously differentiable? Extensions to higher dimension might be possible as well.
}
\end{problem}
\begin{problem}\label{rm}{\rm
{\bf (More than just one function)}
Let $\X\subset[0,1]$ and fix $(\la_x)_{x\in\X}$ with $\la_x>0$. 
Under which conditions is there a simple point process $N=\sum_{i\in\N}\delta_{f_i}$ (in the sense of \cite[Definition 7.1.VII]{DV-J88}) on the space $C([0,1])$ of continuous functions on $[0,1]$ (or any other suitable space of regular functions) such that  
(a) for all $x\in\X$ the points $f_i(x),\ i\in\N,$ are pairwise distinct, (b) there are independent homogeneous Poisson point processes $\Y_x,\ x\in\X,$ with intensities $\la_x$ such that $\{f_i(x):\ i\in\N\}\subseteq\Y_x$ for all $x\in\X$ and (c) 
the ``vertically shifted" point process $\sum_{i\in\N}\delta_{f_i+y}$ has for all $y\in\R$ the same distribution as $N$?
}
\end{problem}

\bibliographystyle{amsalpha}
\vspace*{1mm}

\noindent 
Eberhard Karls Universit\"at T\"ubingen\\
Mathematisches Institut\\
Auf der Morgenstelle 10\\
72076 T\"ubingen, Germany\\
{\verb+martin.zerner@uni-tuebingen.de+} 

\end{document}